\tikzset{
modal/.style={>=stealth,shorten >=1pt,shorten <=1pt,auto,node distance=1.5cm,
semithick},
world/.style={circle,draw,minimum size=0.5cm,fill=gray!15},
point/.style={circle,draw,inner sep=0.5mm,fill=black},
reflexive above/.style={->,loop,looseness=7,in=120,out=60},
reflexive below/.style={->,loop,looseness=7,in=240,out=300},
reflexive left/.style={->,loop,looseness=7,in=150,out=210},
reflexive right/.style={->,loop,looseness=7,in=30,out=330}
}
\theoremstyle{definition}
\newtheorem{theorem}{Theorem}
\newtheorem{lemma}{Lemma}
\newtheorem{corollary}{Corollary}
\newtheorem{subclaim}{Subclaim}
\newenvironment{subproof}[1][\proofname]{%
  \begin{proof}[#1]%
}{%
  \end{proof}%
}
\title{A Canonical Model for Constant Domain Basic First-Order Logic} 
\author{Ben Middleton \\ University of Notre Dame}
\date{}
\begin{document}
\maketitle

\begin{abstract}
I build a canonical model for constant domain basic first-order logic ($\textsf{BQL}_\textsf{CD}$), the constant domain first-order extension of Visser's basic propositional logic, and use the canonical model to verify that $\textsf{BQL}_\textsf{CD}$ satisfies the disjunction and existence properties.
\end{abstract}

\section{Introduction}

Basic propositional logic (\textsf{BPL}) is the subintuitionistic propositional logic obtained by dropping the requirement on the Kripke models for intuitionistic propositional logic (\textsf{IPL}) that the accessibility relation is reflexive. Most notably, dropping reflexivity invalidates modus ponens.  Visser [7] introduced \textsf{BPL} and proved completeness by building a canonical model.  Basic first-order logic (\textsf{BQL}), introduced by Ruitenburg [6], is the subintuitionistic first-order extension of \textsf{BPL} obtained by (i) dropping the requirement on the Kripke models for intuitionistic first-order logic (\textsf{IQL}) that the accessibility relation is reflexive, (ii) restricting the object language by replacing the clause $\phi : \forall v \phi$ in the inductive definition of formulas with the clause $\phi, \psi : \forall \overline{v}(\phi \rightarrow \psi)$ and (iii) setting $w \Vdash \forall \overline{v}(\phi \rightarrow \psi)(\overline{a})$ iff for every $u \succ w$ and all $\overline{b} \in dom(u)$: $u \Vdash \phi(\overline{a}, \overline{b})$ only if $u \Vdash \psi(\overline{a}, \overline{b}).$\begin{footnote}{This definition of \textsf{BQL} differs from the definition given in Ruitenburg's original paper [6]. We identify \textsf{BQL} with the set of pairs $\langle \Gamma, \phi \rangle$ such that for every world $w$ in every \textsf{BQL} model: $w \Vdash \Gamma$ only if $w \Vdash \phi$ (where $\Gamma \cup \{\phi\}$ is a set of sentences in the language of \textsf{BQL}). By contrast, Ruitenburg identifies \textsf{BQL} with the set of sequents $\phi \hspace{-1mm} \implies \hspace{-1mm} \psi$ such that for every world $w$ in every \textsf{BQL} model: $w \Vdash \phi$ only if $w \Vdash \psi$ (where $\phi$ and $\psi$ are sentences in the language of \textsf{BQL}). So our definition of \textsf{BQL} essentially generalizes Ruitenburg's definition to allow for arbitrarily many (including zero) premises.}\end{footnote}  A natural way of extending \textsf{BQL} to the full object language was suggested by Restall [5], who advocated restricting the \textsf{BQL} models to those with constant domains and giving $\forall$ its classical satisfaction condition.\begin{footnote}{Restall also added an actual world and required the actual world see itself, thereby regaining modus ponens. We do not take this approach here.}\end{footnote} This results in constant domain basic first-order logic ($\textsf{BQL}_\textsf{CD}$).  The relationship between $\textsf{BQL}, \textsf{BQL}_\textsf{CD}$ and their intuitionistic counterparts is pictured below:\begin{footnote}{We identify a logic \textsf{L} with the set of pairs $\langle \Gamma, \phi \rangle$ such that $\Gamma \models_\textsf{L} \phi$.}\end{footnote}
$$
\begin{tikzpicture}
  \matrix (m) [matrix of math nodes, row sep=2em, column sep=2em, minimum width=2em]
  {
     \textsf{BQL}_\textsf{CD} & \textsf{IQL}_\textsf{CD} \\
     \textsf{BQL} & \textsf{IQL} \\};
  \path[-stealth]
    (m-1-1) edge [draw=none] node [] {$\subset$} (m-1-2)
    (m-2-1) edge [draw=none] node [sloped, auto=false, allow upside down] {$\subset$} (m-1-1)
    (m-2-1) edge [draw=none] node [] {$\subset$} (m-2-2)
    (m-2-2) edge [draw=none] node [sloped, auto=false, allow upside down] {$\subset$} (m-1-2);
\end{tikzpicture}
$$
The proper inclusion $\textsf{BQL} \subset \textsf{BQL}_\textsf{CD}$ holds even when we restrict $\textsf{BQL}_\textsf{CD}$ to the language of $\textsf{BQL}$, as witnessed by the fact that the inference
$$
\infer[(v \text{ not free in } \phi)]
	{\top \rightarrow \phi \vee \forall v(\top \rightarrow \psi)}
	{\forall v(\top \rightarrow \phi \vee \psi)}
$$
is valid in $\textsf{BQL}_\textsf{CD}$ but not in $\textsf{BQL}$. Restall attempted to prove completeness for $\textsf{BQL}_\textsf{CD}$ by building a canonical model but was unable to do so. More recently, Ishigaki and Kikuchi [2] proved completeness for $\textsf{BQL}_\textsf{CD}$ using a tree-sequent calculus.  However, a canonical model proof of completeness would be preferable, since we could use the canonical model to verify that $\textsf{BQL}_\textsf{CD}$ satisfies the disjunction and existence properties. The major impediment to generalizing Visser's canonical model construction for $\textsf{BPL}$ to $\textsf{BQL}_\textsf{CD}$ is ensuring the existence of witnesses when extending prime saturated $\textsf{BQL}_\textsf{CD}$-theories to larger prime saturated $\textsf{BQL}_\textsf{CD}$-theories.  Ordinarily, the existence of witnesses is guaranteed by adding fresh constant symbols to the object language.  However, this would result in a Kripke model with varying domains. The same problem arises when trying to generalize the canonical model construction for $\textsf{IPL}$ to $\textsf{IQL}_\textsf{CD}$. In the case of $\textsf{IQL}_\textsf{CD}$, a solution has been found (Gabbay, Shehtman and Skvortsov [1]).  In this paper, I build a canonical model for $\textsf{BQL}_\textsf{CD}$ by carrying over the solution for $\textsf{IQL}_\textsf{CD}$. I use the canonical model to verify that $\textsf{BQL}_\textsf{CD}$ satisfies the disjunction and existence properties.

\section{Constant Domain Basic First-Order Logic}

\subsection{Model Theory}

Let $\mathcal{L}$ be a first-order language with primitive operators $\top$, $\bot$, $\wedge$, $\vee$, $\rightarrow$, $\forall$, $\exists$. A transitive frame is a pair $\langle W, \prec \rangle$ such that $W$ is a non-empty set (the set of worlds) and $\prec$ is a transitive binary relation on $W$ (the accessibility relation).  An $\mathcal{L}$-model (for $\textsf{BQL}_\textsf{CD}$) is a $4$-tuple $\mathfrak{M} = \langle W, \prec, M, |\mathord{\cdot}| \rangle$ such that $\langle W, \prec \rangle$ is a transitive frame, $M$ is a non-empty set (the domain of quantification) and $|\mathord{\cdot}|$ is a function whose domain is the signature of $\mathcal{L}$ such that $|c| \in M$, $|f^n|: M^n \rightarrow M$ and $|R^n|: W \rightarrow \mathcal{P}(M^n)$, subject to the constraint that $w \prec u$ only if $|R^n|(w) \subseteq |R^n|(u)$. For a term $t(\overline{v}) \in \mathcal{L}$ and $\overline{a} \in M^n$, we recursively define $|t|(\overline{a})$ as follows:
\begin{align*}
|c|(\overline{a}) &= |c| \\
|v_i|(\overline{a}) &= a_i \\
|f^n(t_1,...,t_n)|(\overline{a}) &= |f^n|(|t_1|(\overline{a}),...,|t_n|(\overline{a})).
\end{align*}
For a formula $\phi(\overline{v}) \in \mathcal{L}, \overline{a} \in M^n$ and $w \in W$, we inductively define $\mathfrak{M}, w \Vdash \phi(\overline{a})$ as follows (suppressing $\mathfrak{M}$ for brevity):
\begin{align*}
w &\Vdash \top(\overline{a}) \\
w \Vdash R^n(t_1,...,t_n)(\overline{a}) &\iff \langle |t_1|(\overline{a}),...,|t_n|(\overline{a}) \rangle \in |R^n|(w) \\
w \Vdash (\phi \wedge \psi)(\overline{a}) &\iff w \Vdash \phi(\overline{a}) \text{ and } w \Vdash \psi(\overline{a}) \\
w \Vdash (\phi \vee \psi)(\overline{a}) &\iff w \Vdash \phi(\overline{a}) \text{ or } w \Vdash \psi(\overline{a}) \\
w \Vdash (\phi \rightarrow \psi)(\overline{a}) &\iff \text{for all } u \succ w: \text{ if } u \Vdash \phi(\overline{a}) \text{ then } u \Vdash \psi(\overline{a}) \\
w \Vdash \exists v \phi(\overline{a}) &\iff \text{for some } b \in M: w \Vdash \phi(\overline{a}, b)  \\
w \Vdash \forall v \phi(\overline{a}) &\iff \text{for all } b \in M: w \Vdash \phi(\overline{a}, b). 
\end{align*}
It follows by omission that $w \not\Vdash \bot(\overline{a})$.

\begin{theorem}[Persistence] If $w \Vdash \phi(\overline{a})$ and $w \prec u$ then $u \Vdash \phi(\overline{a})$.
\end{theorem}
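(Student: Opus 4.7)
The plan is to prove persistence by induction on the complexity of the formula $\phi$, reading off the required behavior from each clause of the definition of $\Vdash$. The base cases are straightforward: $\top$ is trivial and $\bot$ is vacuous, while for an atomic formula $R^n(t_1, \dots, t_n)$ the claim reduces to the observation that the interpretation of terms does not depend on the world, together with the explicit constraint that $w \prec u$ implies $|R^n|(w) \subseteq |R^n|(u)$. The conjunction, disjunction, and quantifier clauses then pose no difficulty: the truth conditions involve no world shift, so the inductive hypothesis applied to the immediate subformulas (for $\exists$, at the specific witness $b$; for $\forall$, uniformly in $b$) delivers what we need.

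The interesting case, and the only place transitivity is used, is the conditional $\phi \rightarrow \psi$. Suppose $w \Vdash (\phi \rightarrow \psi)(\overline{a})$ and $w \prec u$; I want $u \Vdash (\phi \rightarrow \psi)(\overline{a})$, so let $v \succ u$ with $v \Vdash \phi(\overline{a})$. Because $\prec$ is transitive and $w \prec u \prec v$, we have $w \prec v$, so the hypothesis on $w$ applied to $v$ yields $v \Vdash \psi(\overline{a})$. This step does not require the inductive hypothesis at all, which is why the argument still goes through in the subintuitionistic setting where $\rightarrow$ lacks the reflexivity-based reading.

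The main (and really only) obstacle worth flagging is bookkeeping the parameters $\overline{a}$: because $\phi$ may contain free variables beyond those shown, and because the inductive cases for $\exists v$ and $\forall v$ enlarge the parameter tuple by one component, the induction should be formulated with $\overline{a}$ universally quantified at each stage so that the hypothesis can be reapplied to the extended tuple $(\overline{a}, b)$. Once the induction is set up that way, the remaining work is routine and the theorem follows.
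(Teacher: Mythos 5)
Your proof is correct and is exactly the induction on formula construction that the paper invokes (the paper leaves it as "an easy induction"); your treatment of the $\rightarrow$ case via transitivity of $\prec$, and the observation that the atomic case rests on the monotonicity constraint on $|R^n|$, are precisely the points that make the induction go through.
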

\begin{proof}
An easy induction on the construction of $\mathcal{L}$-formulas.
\end{proof}

For sentences $\Gamma \cup \{\phi\} \subseteq \mathcal{L}$, we write $\Gamma \models \phi$ iff for every $\mathcal{L}$-model $\mathfrak{M}$ and every world $w \in \mathfrak{M}$: $w \Vdash \Gamma$ only if $w \Vdash \phi$. 

\begin{theorem}[Conditional Proof] If $\Gamma, \phi \models \psi$ then $\Gamma \models \phi \rightarrow \psi$.
\end{theorem}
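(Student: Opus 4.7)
The plan is to unpack the definition of $\models$ on the right-hand side and reduce it to the hypothesis by invoking Persistence. Specifically, I will fix an arbitrary $\mathcal{L}$-model $\mathfrak{M} = \langle W, \prec, M, |\mathord{\cdot}| \rangle$ and an arbitrary world $w \in W$ with $w \Vdash \Gamma$, and show $w \Vdash \phi \rightarrow \psi$. By the satisfaction clause for $\rightarrow$, this amounts to fixing an arbitrary successor $u \succ w$ with $u \Vdash \phi$ and deriving $u \Vdash \psi$.

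The key observation is that although $w$ itself need not force $\phi$ (indeed $w \not\prec w$ in general, so modus ponens fails), the successor $u$ inherits $\Gamma$ automatically. Concretely, for each $\gamma \in \Gamma$ we have $w \Vdash \gamma$, and since $w \prec u$, Persistence yields $u \Vdash \gamma$. Combined with $u \Vdash \phi$, this gives $u \Vdash \Gamma \cup \{\phi\}$, so the hypothesis $\Gamma, \phi \models \psi$ applied to the model $\mathfrak{M}$ at the world $u$ delivers $u \Vdash \psi$, as required.

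There is no real obstacle here; the argument goes through exactly as in the intuitionistic case, and the failure of reflexivity is not felt because we only need to transfer $\Gamma$ from $w$ to its $\prec$-successors, which is precisely what Persistence provides. One small point to note is that $\Gamma$ is a set of \emph{sentences}, so no issues with free variables or parameters arise when applying Persistence term-by-term.
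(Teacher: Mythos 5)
Your proof is correct and matches the paper's argument in all essentials: both hinge on using Persistence to transfer $\Gamma$ from $w$ to the successor $u$ and then applying the hypothesis $\Gamma, \phi \models \psi$ at $u$. The only difference is that the paper phrases the argument contrapositively, which is immaterial.
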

\begin{proof} Suppose $\Gamma \not \models \phi \rightarrow \psi$.  Then there exist $w \in \mathfrak{M}$ such that $w \Vdash \Gamma$ and $w \not \Vdash \phi \rightarrow \psi$.  So $u \Vdash \phi$ and $u \not \Vdash \psi$ for some $u \succ w$. By Persistence, $u \Vdash \Gamma$.  But then $\Gamma, \phi \not \models \psi$.
\end{proof}

\begin{theorem}[Weak Modus Ponens] For $\Gamma \subseteq \mathcal{L} \setminus \{\rightarrow\}$: if $\Gamma \models \phi \rightarrow \psi$ then $\Gamma, \phi \models \psi$.
\end{theorem}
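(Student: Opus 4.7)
The plan is to argue by contraposition. Suppose $\Gamma, \phi \not\models \psi$, witnessed by some $\mathcal{L}$-model $\mathfrak{M} = \langle W, \prec, M, |\cdot| \rangle$ and $w \in W$ with $w \Vdash \Gamma$, $w \Vdash \phi$, and $w \not\Vdash \psi$. I would then construct an expansion of $\mathfrak{M}$ containing a fresh world $w^*$ that forces $\Gamma$ and sees $w$ as a $\prec^*$-successor, which immediately refutes $\Gamma \models \phi \rightarrow \psi$.

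Concretely, let $\mathfrak{M}^* = \langle W \cup \{w^*\}, \prec^*, M, |\cdot|^* \rangle$ with $w^* \notin W$, where $\prec^*$ agrees with $\prec$ on $W \times W$ and additionally $w^* \prec^* u$ iff $u = w$ or $w \prec u$. Set $|R^n|^*(w^*) := |R^n|(w)$ for every relation symbol, and leave all other interpretations unchanged. Checking that $\mathfrak{M}^*$ is a legitimate $\mathcal{L}$-model is routine: $\prec^*$ inherits transitivity from $\prec$ (the only nontrivial case is $w^* \prec^* y \prec^* z$, which by construction forces $w \prec z$ or $z = w$ handled directly), and the only new instances of the persistence constraint reduce to $|R^n|(w) \subseteq |R^n|(u)$ for $u = w$ or $w \prec u$, both of which are immediate.

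The substance of the proof is two forcing-preservation claims. First, for every $u \in W$ and every $\mathcal{L}$-formula $\chi$, $u \Vdash^* \chi$ iff $u \Vdash \chi$; this is an induction on $\chi$ that goes through because the expansion adds no new $\prec^*$-successors to any world already in $W$, so the $\rightarrow$-clause is unaffected. Second, for every $\rightarrow$-free sentence $\chi$ (possibly with parameters from $M$), $w^* \Vdash^* \chi$ iff $w \Vdash^* \chi$; the atomic case uses $|R^n|^*(w^*) = |R^n|(w)$, the $\wedge, \vee$ cases are trivial, and the $\forall, \exists$ cases reduce cleanly because the constant-domain classical clauses at $w^*$ and at $w$ quantify over the same set $M$ and invoke the induction hypothesis at the same world. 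Combining the two claims, $w \Vdash \Gamma$ yields $w \Vdash^* \Gamma$, hence $w^* \Vdash^* \Gamma$; while $w^*$ has $w$ as a $\prec^*$-successor with $w \Vdash^* \phi$ and $w \not\Vdash^* \psi$, so $w^* \not\Vdash^* \phi \rightarrow \psi$ and thus $\Gamma \not\models \phi \rightarrow \psi$.

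The main obstacle is the second preservation claim, and this is where the hypothesis $\Gamma \subseteq \mathcal{L} \setminus \{\rightarrow\}$ is indispensable: if $\chi$ were of the form $\alpha \rightarrow \beta$, one would have to compare the $\prec^*$-successors of $w^*$ with those of $w$, and these genuinely differ because $w$ itself is a $\prec^*$-successor of $w^*$ but need not be a $\prec$-successor of $w$ (reflexivity having been dropped). Excluding $\rightarrow$ sidesteps precisely this discrepancy, and everything else in the argument is bookkeeping.
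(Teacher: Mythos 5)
Your proof is correct and follows essentially the same route as the paper's: both refute $\Gamma \models \phi \rightarrow \psi$ by adjoining a fresh world below the witnessing world $w$ whose atomic interpretation duplicates that of $w$, so that the new world forces the $\rightarrow$-free set $\Gamma$ but fails $\phi \rightarrow \psi$ via its successor $w$. The paper phrases the construction as ``take the generated submodel and add a world below,'' leaving the two preservation claims implicit, whereas you spell them out; the content is the same.
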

\begin{proof} Suppose $\Gamma, \phi \not \models \psi$. Then there exist $w \in \mathfrak{M}$ such that $w \Vdash \Gamma \cup \{\phi\}$ and $w \not \Vdash \psi$.  Take the submodel of $\mathfrak{M}$ generated by $w$. Add a new world $u$ below $w$ such that $|R^n|(u) = |R^n|(w)$. Then $u \Vdash \Gamma$ and $u \not \Vdash \phi \rightarrow \psi$.  So $\Gamma \not \models \phi \rightarrow \psi$.
\end{proof}

\begin{theorem}[Compactness] If $\Gamma \models \phi$ then $\Gamma_0 \models \phi$ for some finite $\Gamma_0 \subseteq \Gamma$.
\end{theorem}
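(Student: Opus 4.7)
The plan is to derive Compactness from the completeness of $\textsf{BQL}_\textsf{CD}$, which I establish later in the paper via a canonical model construction. Define a provisional finitary relation $\vdash$ on sentences by stipulating that $\Gamma \vdash \phi$ iff $\Gamma_0 \models \phi$ for some finite $\Gamma_0 \subseteq \Gamma$. Compactness is then just the claim that $\models$ and $\vdash$ coincide. One inclusion, $\vdash\, \subseteq\, \models$, is immediate from the fact that $\models$ is monotone in its left argument, so what remains is to show: whenever $\Gamma \not\vdash \phi$, there exists an $\mathcal{L}$-model with a world that forces every member of $\Gamma$ yet fails to force $\phi$.

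I would argue contrapositively. Assume $\Gamma \not\vdash \phi$, so that the pair $\langle \Gamma, \phi \rangle$ is $\vdash$-consistent. The first step is a Lindenbaum-style extension lemma: any such pair can be enlarged to a \emph{prime saturated $\textsf{BQL}_\textsf{CD}$-theory with witnesses} $\Gamma^* \supseteq \Gamma$ with $\phi \notin \Gamma^*$, meaning $\Gamma^*$ is closed under $\vdash$, prime in the sense that $\psi \vee \chi \in \Gamma^*$ forces $\psi \in \Gamma^*$ or $\chi \in \Gamma^*$, and contains a witness in the domain for each of its existential members. The second step is to invoke the canonical model $\mathfrak{M}^c$ whose worlds are such theories, equipped with the appropriate $\prec$ and $|\mathord{\cdot}|$, and its accompanying Truth Lemma: $\mathfrak{M}^c, \Gamma^* \Vdash \psi$ iff $\psi \in \Gamma^*$. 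Evaluating at $\Gamma^*$ then gives $\mathfrak{M}^c, \Gamma^* \Vdash \Gamma$ and $\mathfrak{M}^c, \Gamma^* \not\Vdash \phi$, witnessing $\Gamma \not\models \phi$.

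The main obstacle is exactly the one flagged in the introduction: securing existential witnesses at the Lindenbaum stage while keeping the domain of quantification constant across worlds. The usual trick of adjoining fresh constants produces varying domains and so is unavailable here. My plan is to adapt the device of Gabbay, Shehtman and Skvortsov from the $\textsf{IQL}_\textsf{CD}$ setting, which supplies witnesses without enlarging the signature, and to verify that their approach survives the weaker subintuitionistic semantics of $\textsf{BQL}_\textsf{CD}$ (in particular, without modus ponens). Once this machinery is in place, Compactness drops out as an immediate byproduct of the canonical model construction; indeed, the above argument is essentially the completeness proof for the finitary relation $\vdash$.
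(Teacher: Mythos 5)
Your route is genuinely different from the paper's, and it has a gap that matters in this setting. The paper proves Compactness by a direct semantic argument --- an ultraproduct construction in the style of the classical proof, adapted to these Kripke models --- with no reference to the proof system or the canonical model. You propose instead to obtain it as a corollary of canonical-model completeness. Two problems. First, the paper's architecture runs in the opposite direction: the canonical model only yields completeness for \emph{finite} premise sets over a \emph{countable} language (Weak Completeness), and Compactness is then used to lift this to arbitrary $\Gamma$ and to $\mathcal{L}$ of arbitrary cardinality. So deriving Compactness from strong completeness is circular unless you can prove strong completeness directly, without Compactness. Second, and decisively, you cannot do so with the machinery available: the Lindenbaum-style lemmas (Extension, Relative Extension) depend on enumerating the $\mathcal{L}^+$-sentences in order type $\omega$, on keeping each $\Gamma_n$ finite so that $\bigwedge \Gamma_n$ is a formula for Relative Deduction, and on drawing witnesses from the countably many $a_i$. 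All of this breaks down when $\mathcal{L}$ is uncountable --- which is exactly the case Compactness is needed to handle.

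Your idea of running the construction over the provisional relation ``$\Gamma \vdash \phi$ iff $\Gamma_0 \models \phi$ for some finite $\Gamma_0 \subseteq \Gamma$'' is reasonable in spirit for countable $\mathcal{L}$ (that relation does inherit the closure properties the construction needs, via Soundness and Conditional Proof), and you correctly identify the constant-domain witness problem; but none of this touches the uncountable case. What is required is a cardinality-independent, proof-theory-independent argument, e.g.\ an ultraproduct of the countermodels witnessing $\Gamma_0 \not\models \phi$, indexed by the finite subsets $\Gamma_0$ of $\Gamma$ over a suitable ultrafilter, together with a \L o\'s-type lemma for forcing at a world. That is the argument the paper intends.
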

\begin{proof} Similar to the ultraproduct proof of compactness for classical first-order logic (see e.g. Poizat [3]).
\end{proof}

\subsection{Proof Theory}

We formulate the natural deduction system $\mathcal{N}\textsf{BQL}_\textsf{CD}$ for $\textsf{BQL}_\textsf{CD}$ in the language $\mathcal{L}^+ = \mathcal{L} \cup \{a_i\}_{i \in \omega}$, where each $a_i$ is a fresh constant symbol (the $a_i$ serve in proofs as names of arbitrarily chosen objects).  $\mathcal{N}\textsf{BQL}_\textsf{CD}$ consists of all trees of (possibly discharged) $\mathcal{L}^+$-sentences constructed in accordance with the following inference rules:\begin{footnote}{By restricting the formulas linked by an inference rule to sentences we determine which free variables, if any, a subformula may contain (e.g. $\phi$ may not contain free variables in CD).}\end{footnote}
$$
[\top]\hspace{2mm}(\top\text{-Int})
\qquad
\infer[(\bot\text{-Elim})]
	{\phi}
	{\bot}
$$
$$
\infer[(\wedge\text{-Int})]
	{\phi \wedge \psi}
	{
	\phi
	&
	\psi
	}
\qquad
\infer[(\wedge\text{-Elim})]
	{\phi / \psi}
	{\phi \wedge \psi}
$$
$$
\infer[(\vee\text{-Int})]
	{\phi \vee \psi}
	{\phi / \psi}
\qquad
\infer[(\vee\text{-Elim})]
	{\chi}
	{
	\phi \vee \psi
	&
	\infer*
		{\chi}
		{[\phi]}
	&
	\infer*
		{\chi}
		{[\psi]}
	}
$$
$$
\infer[(\rightarrow\hspace{-1mm}\text{-Int})]
	{\phi \rightarrow \psi}
	{
	\infer*
		{\psi}
		{[\phi]}
	}
\qquad
\infer[(\text{Internal Transitivity})]
	{\phi \rightarrow \chi}
	{
	\phi \rightarrow \psi
	&
	\psi \rightarrow \chi
	}
$$
$$
\infer[(\text{Internal } \wedge\hspace{-1mm}\text{-Int})]
	{\phi \rightarrow \psi \wedge \chi}
	{
	\phi \rightarrow \psi
	&
	\phi \rightarrow \chi
	}
\qquad
\infer[(\text{Internal } \vee\hspace{-1mm}\text{-Elim})]
	{\phi \vee \psi \rightarrow \chi}
	{
	\phi \rightarrow \chi
	&
	\psi \rightarrow \chi
	}
$$
$$
\infer[(\text{Internal } \forall\text{-Int})]
	{\phi \rightarrow \forall v \psi}
	{\forall v (\phi \rightarrow \psi)}
\qquad
\infer[(\text{Internal } \exists\text{-Elim})]
	{\exists v \phi \rightarrow \psi}
	{\forall v (\phi \rightarrow \psi)}
$$
$$
\infer[(\forall\text{-Int})]
	{\forall v \phi}
	{\phi(a_i)}
\qquad
\infer[(\forall\text{-Elim})]
	{\phi(t)}
	{\forall v \phi}
\qquad
\infer[(\text{CD})]
	{\phi \vee \forall v \psi}
	{\forall v (\phi \vee \psi)}
$$
$$
\infer[(\exists\text{-Int})]
	{\exists v \phi}
	{\phi(t)}
\qquad
\infer[(\exists\text{-Elim})]
	{\psi}
	{
	\exists v \phi
	&
	\infer*
		{\psi}
		{[\phi(a_i)]}
	}
$$
In $\forall$-Int, $a_i$ does not occur in $\phi$ or in any open assumption in the main subproof.  In $\exists$-Elim, $a_i$ does not occur in $\phi$, $\psi$ or in any open assumption besides $\phi(a_i)$ in the right main subproof.  We write $\Gamma \vdash \phi$ iff there exists a proof of $\phi$ from $\Gamma$ in $\mathcal{N}\textsf{BQL}_\textsf{CD}$.  

\begin{theorem}[Soundness] If $\Gamma \vdash \phi$ then $\Gamma \models \phi$.
\end{theorem}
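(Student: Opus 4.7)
The plan is to induct on the height of the proof tree of $\phi$ from $\Gamma$. Because derivations live in $\mathcal{L}^+$ while $\models$ is defined only on $\mathcal{L}$-sentences, I would strengthen the inductive hypothesis to accommodate the auxiliary constants: for every $\mathcal{L}$-model $\mathfrak{M}$, every extension $|\cdot|'$ of $|\cdot|$ to the parameters $a_i$ occurring in the proof, and every world $w$, if $w \Vdash \Gamma$ under $|\cdot|'$ then $w \Vdash \phi$ under $|\cdot|'$. The theorem follows by specializing to $\Gamma \cup \{\phi\} \subseteq \mathcal{L}$, where $|\cdot|'$ is irrelevant. A useful preliminary sublemma is that satisfaction is insensitive to the interpretation of constants not appearing in the formula.

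The base cases (assumption and $\top$-Int) are immediate, as are the rules whose validity reads straight off the satisfaction clauses: $\bot$-Elim, $\wedge$-Int/Elim, $\vee$-Int/Elim, $\forall$-Elim, and $\exists$-Int. The rule $\rightarrow$-Int is precisely Theorem~2 (Conditional Proof), and the Internal rules Transitivity, $\wedge$-Int, and $\vee$-Elim are short calculations relying only on the semantic clause for $\rightarrow$. The three rules whose soundness genuinely exploits constant domains are Internal $\forall$-Int, Internal $\exists$-Elim, and CD; in each case the argument turns on the fact that the same $b \in M$ can be used to witness a quantifier at $w$ and at every $u \succ w$. For CD, for instance, if $w \Vdash \forall v(\phi \vee \psi)(\overline{a})$ and $w \not\Vdash \phi(\overline{a})$, then since $v$ is not free in $\phi$ every $b \in M$ forces $w \Vdash \psi(\overline{a}, b)$, so $w \Vdash \forall v \psi$.

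The most delicate cases are $\forall$-Int and $\exists$-Elim, which is exactly why the inductive hypothesis was strengthened. For $\forall$-Int: applying the IH to the subproof under each reassignment $|a_i|' := b$ yields $w \Vdash \phi(b)$ for every $b \in M$, and the freshness of $a_i$ in $\Gamma$ and in $\phi$ guarantees that the premises $w \Vdash \Gamma$ are undisturbed by these reassignments. For $\exists$-Elim: given $w \Vdash \exists v \phi$, pick a witness $b \in M$ with $w \Vdash \phi(b)$, reinterpret $|a_i|' := b$, and apply the IH to the right subproof; the side condition that $a_i$ is absent from the other open assumptions and from $\psi$ makes the reinterpretation harmless. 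The main obstacle in writing this up cleanly is formulating the strengthened inductive hypothesis and the invariance sublemma precisely; once that is done, every case reduces to a short semantic check.
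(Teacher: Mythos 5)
Your proposal is correct and follows essentially the same route as the paper: induction on the construction of derivations, with $\rightarrow$-Int handled by Conditional Proof (Theorem 2) and the remaining rules checked directly against the satisfaction clauses. The paper leaves the treatment of the eigenvariable rules and the parameters $a_i$ implicit, whereas you make the needed strengthening of the inductive hypothesis explicit, but this is a matter of detail rather than a different argument.
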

\begin{proof} By induction on the construction of proofs in $\mathcal{N}\textsf{BQL}_\textsf{CD}$.  The base case is easy.  The induction steps are also easy except for $\rightarrow$-Int, where we appeal to Conditional Proof.
\end{proof}

\begin{lemma}[Distribution] $\phi \wedge (\psi \vee \chi) \vdash (\phi \wedge \psi) \vee (\phi \wedge \chi)$.
\end{lemma}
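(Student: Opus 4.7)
The plan is to run the standard intuitionistic derivation of distribution, which uses only $\wedge$-Int, $\wedge$-Elim, $\vee$-Int, and $\vee$-Elim and therefore does not depend on modus ponens; all four rules are present in $\mathcal{N}\textsf{BQL}_\textsf{CD}$. First, I apply $\wedge$-Elim to the open assumption $\phi \wedge (\psi \vee \chi)$ to extract $\psi \vee \chi$, which will serve as the major premise of a $\vee$-Elim step whose two minor subproofs are symmetric.

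For the left subproof, I discharge $[\psi]$, take a second leaf occurrence of the open assumption $\phi \wedge (\psi \vee \chi)$ and apply $\wedge$-Elim to obtain $\phi$, combine $\phi$ with $\psi$ via $\wedge$-Int to obtain $\phi \wedge \psi$, and apply $\vee$-Int to conclude $(\phi \wedge \psi) \vee (\phi \wedge \chi)$. The right subproof is identical with $\chi$ discharged in place of $\psi$. Applying $\vee$-Elim to $\psi \vee \chi$ together with these two subproofs yields the desired conclusion from the single undischarged assumption $\phi \wedge (\psi \vee \chi)$.

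Since $\phi$, $\psi$, and $\chi$ are sentences, no eigenvariable or free-variable side conditions come into play, and I expect no genuine obstacle: the derivation is purely propositional and would already go through in the weaker $\textsf{BPL}$. The only point worth flagging is that the tree-style format requires two leaf occurrences of the open assumption $\phi \wedge (\psi \vee \chi)$—one to feed the main $\vee$-Elim and one inside each minor subproof to supply $\phi$—which is standard and unproblematic.
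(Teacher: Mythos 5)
Your derivation is correct: the paper states Distribution without proof precisely because it is this routine propositional argument, using only $\wedge$-Elim, $\vee$-Elim, $\wedge$-Int, and $\vee$-Int, all of which are primitive rules of $\mathcal{N}\textsf{BQL}_\textsf{CD}$ and none of which involve modus ponens. Your remark about needing several leaf occurrences of the undischarged assumption $\phi \wedge (\psi \vee \chi)$ is accurate and unproblematic in the tree format the paper uses.
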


\begin{lemma}[Infinite Distribution] $\phi \wedge \exists v \psi \vdash \exists v(\phi \wedge \psi)$.
\end{lemma}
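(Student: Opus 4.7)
The plan is to derive $\exists v(\phi \wedge \psi)$ from $\phi \wedge \exists v \psi$ by the expected route: split the conjunction, then use $\exists$-Elim to introduce a witness, conjoin it with $\phi$, and repackage with $\exists$-Int.

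More concretely, I would fix a constant $a_i$ that does not occur in $\phi$, in $\psi$, or in the assumption $\phi \wedge \exists v \psi$ (such an $a_i$ exists since $\mathcal{L}^+$ contains infinitely many fresh constants). From the single open assumption $\phi \wedge \exists v \psi$, two applications of $\wedge$-Elim give $\phi$ and $\exists v \psi$. I then open a subproof with the temporary assumption $\psi(a_i)$: apply $\wedge$-Int to $\phi$ (already derived from the top assumption) and $\psi(a_i)$ to obtain $\phi \wedge \psi(a_i)$, and then apply $\exists$-Int with term $a_i$ to obtain $\exists v(\phi \wedge \psi)$. Finally, discharge $[\psi(a_i)]$ by $\exists$-Elim against the earlier $\exists v \psi$.

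The only thing that needs care is the eigenvariable condition on $\exists$-Elim: $a_i$ must not occur in $\psi$, in the conclusion $\exists v(\phi \wedge \psi)$, or in any open assumption besides $\psi(a_i)$ in the right subproof. The conclusion contains no occurrence of $a_i$ because $\exists$-Int abstracts the witness $a_i$ back to $v$; the formulas $\phi$ and $\psi$ are $a_i$-free by choice of $a_i$; and the only other open assumption throughout the right subproof is $\phi \wedge \exists v \psi$, which we chose $a_i$ to avoid. So the side condition is met, and this is really the only subtle point — the rest is a mechanical assembly of $\wedge$-Elim, $\wedge$-Int, $\exists$-Int, and $\exists$-Elim.
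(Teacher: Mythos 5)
Your derivation is correct: the paper states Infinite Distribution without proof, treating it as routine, and your construction ($\wedge$-Elim on the assumption, then $\exists$-Elim against $\exists v\psi$ with a subproof that forms $\phi\wedge\psi(a_i)$ by $\wedge$-Int and closes with $\exists$-Int) is exactly the intended mechanical argument. You also correctly identify and verify the only delicate point, the eigenvariable condition on $\exists$-Elim, noting that the sole open assumption other than $\psi(a_i)$ in the right subproof is $\phi\wedge\exists v\psi$, which is $a_i$-free by choice.
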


For sentences $\Sigma \subseteq \mathcal{L}^+$, let $\mathcal{N}\textsf{BQL}_\textsf{CD}(\Sigma)$ denote the natural deduction system obtained by adding the rule
$$
\infer
	{\psi}
	{
	\phi
	&
	\infer
		{\phi \rightarrow \psi}
		{
		\infer
			{\mathcal{N}\textsf{BQL}_\textsf{CD}}
			{\Sigma}
		}
	}
$$
to $\mathcal{N}\textsf{BQL}_\textsf{CD}$ and adding a restriction which states that (i) no occurrence $\chi^i$ of an open assumption in position
$$
\infer*
	{}
	{
	...
	&
	\infer
		{\psi}
		{
		\infer*
			{\phi}
			{}
		&
		\infer*
			{\phi \rightarrow \psi}
			{\chi^i}
		}
	&
	...
	}
$$
can be discharged (we say that such occurrences are \textit{unsafe}) and (ii) no occurrence of $\phi(a_i)$ in the right main subproof of $\exists$-Elim is unsafe. Write $\Gamma \vdash_\Sigma \phi$ iff there exists a proof of $\phi$ from $\Gamma$ in $\mathcal{N}\textsf{BQL}_\textsf{CD}(\Sigma)$. To state the next lemma concisely, let $\bigwedge \emptyset = \top$. 

\begin{lemma}[Relative Deduction] For $|\Gamma| < \omega$, if there exists a proof $\Pi \in \mathcal{N}\textsf{BQL}_\textsf{CD}(\Sigma)$ of $\phi$ from $\Sigma' \cup \Gamma$ such that every open assumption which occurs unsafely in $\Pi$ is contained in $\Sigma'$ then $\Sigma' \vdash \bigwedge \Gamma \rightarrow \phi$.
\end{lemma}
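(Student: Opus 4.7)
The plan is to induct on the structure of $\Pi$. In the base cases, $\Pi$ is a single leaf: either an application of $\top$-Int, or an undischarged assumption $\phi \in \Sigma' \cup \Gamma$. If $\phi \in \Sigma'$ (or $\phi = \top$), I re-derive $\phi$ trivially in $\mathcal{N}\textsf{BQL}_\textsf{CD}$ from $\Sigma'$ and then apply $\rightarrow$-Int, discharging zero occurrences of $[\bigwedge \Gamma]$, to conclude $\bigwedge \Gamma \rightarrow \phi$. If $\phi \in \Gamma$, I start instead from the assumption $[\bigwedge \Gamma]$, apply $\wedge$-Elim the appropriate number of times to extract $\phi$, and discharge by $\rightarrow$-Int.

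For the inductive step I go case-by-case on the last rule of $\Pi$. The general pattern is to invoke the inductive hypothesis (IH) on each subproof -- noting that the unsafe-assumption condition propagates to subproofs, since unsafe occurrences are never discharged above -- and then combine the resulting implications. The non-discharging rules ($\wedge$-Elim, $\vee$-Int, $\forall$-Elim, $\exists$-Int, $\bot$-Elim, CD) are routine: for each I derive by a single application of $\rightarrow$-Int the implicational form of the rule (e.g. $\forall v(\phi \vee \psi) \rightarrow \phi \vee \forall v \psi$ in the CD case) and chain with the IH output via Internal Transitivity. The $\wedge$-Int step uses Internal $\wedge$-Int on the two IH outputs directly, and $\forall$-Int lifts the IH output $\bigwedge \Gamma \rightarrow \phi(a_i)$ by $\forall$-Int to $\forall v(\bigwedge \Gamma \rightarrow \phi)$ and then uses Internal $\forall$-Int -- the freshness condition on $a_i$ transfers because $a_i$ does not occur in any open assumption of $\Pi$, hence not in $\bigwedge \Gamma$ or in $\Sigma'$. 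The $\vee$-Elim step uses Distribution to convert $\bigwedge \Gamma \wedge (\phi_1 \vee \phi_2)$ into $(\bigwedge \Gamma \wedge \phi_1) \vee (\bigwedge \Gamma \wedge \phi_2)$, after which Internal $\vee$-Elim on the two conditional IH outputs delivers the result.

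The delicate cases are $\rightarrow$-Int and $\exists$-Elim, both of which require a currying argument to compensate for the absence of modus ponens in the base system. For $\rightarrow$-Int: the IH yields $\bigwedge \Gamma \wedge \phi_1 \rightarrow \phi_2$, and I then assume $[\bigwedge \Gamma]$ as an open leaf, derive $\phi_1 \rightarrow \bigwedge \Gamma$ by $\rightarrow$-Int discharging zero copies of $[\phi_1]$ from $\bigwedge \Gamma$, combine with the trivial $\phi_1 \rightarrow \phi_1$ by Internal $\wedge$-Int to get $\phi_1 \rightarrow \bigwedge \Gamma \wedge \phi_1$, chain with the IH output via Internal Transitivity to get $\phi_1 \rightarrow \phi_2$, and finally discharge $[\bigwedge \Gamma]$ by $\rightarrow$-Int to obtain $\bigwedge \Gamma \rightarrow (\phi_1 \rightarrow \phi_2)$. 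The $\exists$-Elim step is analogous: from the two IH outputs $\bigwedge \Gamma \rightarrow \exists v \phi$ and $\bigwedge \Gamma \wedge \phi(a_i) \rightarrow \psi$, I apply $\forall$-Int and then Internal $\exists$-Elim to obtain $\exists v(\bigwedge \Gamma \wedge \phi) \rightarrow \psi$, chain this with the implicational form of Infinite Distribution to get $\bigwedge \Gamma \wedge \exists v \phi \rightarrow \psi$, and finally compose with $\bigwedge \Gamma \rightarrow \bigwedge \Gamma \wedge \exists v \phi$ (built using Internal $\wedge$-Int and the other IH output) to obtain $\bigwedge \Gamma \rightarrow \psi$.

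For the new rule of $\mathcal{N}\textsf{BQL}_\textsf{CD}(\Sigma)$, the IH applied to the left premise yields $\Sigma' \vdash \bigwedge \Gamma \rightarrow \phi$; the hypothesis that every unsafe open assumption lies in $\Sigma'$, together with the fact that every open assumption of the right subderivation occurs in the unsafe position, means that the right subderivation is a $\mathcal{N}\textsf{BQL}_\textsf{CD}$-proof of $\phi \rightarrow \psi$ from a subset of $\Sigma'$, and Internal Transitivity finishes the case. The main obstacle I expect is the currying step in the $\rightarrow$-Int case (reused in $\exists$-Elim), which is the substantive ingredient absent from a routine intuitionistic deduction theorem; a secondary concern is the bookkeeping needed to confirm that the freshness conditions on $a_i$ in $\forall$-Int and $\exists$-Elim are preserved under the inductive transformation.
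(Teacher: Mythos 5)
Your overall strategy is the paper's: induction on the structure of $\Pi$, the same three base cases, the currying trick for $\rightarrow$-Int, Distribution for $\vee$-Elim, Infinite Distribution for $\exists$-Elim, and Internal Transitivity to absorb the added rule of $\mathcal{N}\textsf{BQL}_\textsf{CD}(\Sigma)$ using the fact that all open assumptions of its right subderivation are unsafe and hence in $\Sigma'$. The one place your argument genuinely breaks is the eigenvariable bookkeeping you yourself flagged as a ``secondary concern.'' In the $\forall$-Int case you license applying $\forall$-Int to the IH output $\bigwedge\Gamma\rightarrow\phi(a_i)$ on the grounds that $a_i$ ``does not occur in any open assumption of $\Pi$, hence not in $\bigwedge\Gamma$ or in $\Sigma'$.'' That inference is invalid: a proof \emph{from} $\Sigma'\cup\Gamma$ only has its open assumptions \emph{contained in} $\Sigma'\cup\Gamma$, so $a_i$ may occur in members of $\Gamma$ or $\Sigma'$ that the subproof never uses. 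Concretely, take $\Sigma'=\{\forall v\,Q(v)\}$ and $\Gamma=\{P(a_1)\}$, and let $\Pi$ be $\forall v\,Q(v)$, $\forall$-Elim to $Q(a_1)$, $\forall$-Int back to $\forall v\,Q(v)$ (legal, since $P(a_1)$ is never assumed); your step would require generalizing $P(a_1)\rightarrow Q(a_1)$ on $a_1$ while leaving the antecedent untouched, which violates the side condition on $\forall$-Int. Nor can you simply rename the eigenvariable out of the way: in the intended application $\Sigma'$ is a prime saturated theory, which contains sentences mentioning every $a_i$. The same defect infects your $\exists$-Elim case, where you generalize $\bigwedge\Gamma\wedge\phi(a_i)\rightarrow\psi$ on $a_i$.

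The repair is the extra step the paper takes in its Cases 6 and 7: let $\Sigma^*\subseteq\Sigma'$ and $\Gamma^*\subseteq\Gamma$ consist of exactly the open assumptions actually occurring in the relevant subproof (excluding $\phi(a_i)$ in the $\exists$-Elim case), so that the side conditions of the original rule guarantee $a_i$ does not occur in $\Sigma^*\cup\Gamma^*\cup\{\phi\}$; apply the induction hypothesis with $\Sigma^*,\Gamma^*$ in place of $\Sigma',\Gamma$; perform your $\forall$-Int and Internal $\forall$-Int (resp.\ Internal $\exists$-Elim) steps on $\bigwedge\Gamma^*\rightarrow\phi(a_i)$; and finally recover the statement for $\Gamma$ by composing with the trivially derivable $\bigwedge\Gamma\rightarrow\bigwedge\Gamma^*$ via Internal Transitivity. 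With that patch your proof coincides with the paper's. (Two minor further points: the two-premise internal rules --- Internal Transitivity, Internal $\wedge$-Int, Internal $\vee$-Elim --- as last rules are not explicitly covered by your case list, though your general pattern handles them by first forming $\bigwedge\Gamma\rightarrow\alpha\wedge\beta$; and wherever you write $\bigwedge\Gamma\wedge\alpha$ you should, as the paper does, split off the subcase $\Gamma=\emptyset$, where the IH returns $\alpha\rightarrow\phi$ rather than a conjunction.)
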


\begin{proof} By induction on the construction of proofs in $\mathcal{N}\textsf{BQL}_\textsf{CD}(\Sigma)$.

\underline{Base Case} Suppose we have a one-line proof in $\mathcal{N}\textsf{BQL}_\textsf{CD}(\Sigma)$ of $\phi$ from $\Sigma' \cup \Gamma$.  There are three cases. 

\underline{Case 1} $\phi = \top$. Then
$$
\infer
	{\bigwedge \Gamma \rightarrow \top}
	{[\top]}
$$
is a proof of $\bigwedge \Gamma \rightarrow \phi$ from $\Sigma'$ in $\mathcal{N}\textsf{BQL}_\textsf{CD}$.  

\underline{Case 2} $\phi \in \Sigma'$. Then
$$
\infer
	{\bigwedge \Gamma \rightarrow \phi}
	{\phi}
$$
is a proof of $\bigwedge \Gamma \rightarrow \phi$ from $\Sigma'$ in $\mathcal{N}\textsf{BQL}_\textsf{CD}$.  

\underline{Case 3} $\phi \in \Gamma$. Then
$$
\infer
	{\bigwedge \Gamma \rightarrow \phi}
	{
	\infer
		{\phi}
		{
		\infer	
			{\wedge\text{-Elims}}
			{[\bigwedge \Gamma]}	
		}
	}
$$
is a proof of $\bigwedge \Gamma \rightarrow \phi$ from $\Sigma'$ in $\mathcal{N}\textsf{BQL}_\textsf{CD}$.

\underline{Induction Steps} There are seven cases.

\underline{Case 1} Suppose we have a proof of the form
$$
\infer
	{\phi}
	{
	\infer*
		{\alpha}
		{\Sigma', \Gamma}
	}
$$
in $\mathcal{N}\textsf{BQL}_\textsf{CD}(\Sigma)$, where the final inference is $\bot$-Elim, $\wedge$-Elim, $\vee$-Int, Internal $\forall$-Int, Internal $\exists$-Elim, $\forall$-Elim, CD or $\exists$-Int.  Then, by the induction hypothesis, we can find a proof of the form
$$
\infer
	{\bigwedge \Gamma \rightarrow \phi}
	{
	\infer*
		{\bigwedge \Gamma \rightarrow \alpha}
		{\Sigma'}
	&
	\infer
		{\alpha \rightarrow \phi}
		{
		\infer
			{\phi}
			{[\alpha]}
		}
	}
$$
in $\mathcal{N}\textsf{BQL}_\textsf{CD}$.

\underline{Case 2} Suppose we have a proof of the form
$$
\infer
	{\phi}
	{
	\infer*
		{\alpha}
		{\Sigma', \Gamma}
	&
	\infer*
		{\beta}
		{\Sigma', \Gamma}
	}
$$
in $\mathcal{N}\textsf{BQL}_\textsf{CD}(\Sigma)$, where the final inference is $\wedge$-Int, Internal Transitivity, Internal $\wedge$-Int or Internal $\vee$-Elim. Then, by the induction hypothesis, we can find a proof of the form
$$
\infer
	{\bigwedge \Gamma \rightarrow \phi}
	{
	\infer
		{\bigwedge \Gamma \rightarrow \alpha \wedge \beta}
		{
		\infer*
			{\bigwedge \Gamma \rightarrow \alpha}
			{\Sigma'}
		&
		\infer*
			{\bigwedge \Gamma \rightarrow \beta}
			{\Sigma'}
		}
	&
	\infer
		{\alpha \wedge \beta \rightarrow \phi}
		{
		\infer
			{\phi}
			{
			\infer
				{\alpha}
				{[\alpha \wedge \beta]}
			&
			\infer
				{\beta}
				{[\alpha \wedge \beta]}
			}
		}
	}
$$
in $\mathcal{N}\textsf{BQL}_\textsf{CD}$.

\underline{Case 3} Suppose we have a proof of the form
$$
\infer
	{\phi}
	{
	\infer*
		{\alpha \vee \beta}
		{\Sigma', \Gamma}		
	&
	\infer*
		{\phi}
		{\Sigma', \Gamma,  [\alpha]}		
	&
	\infer*
		{\phi}
		{\Sigma', \Gamma, [\beta]}	
	}
$$
in $\mathcal{N}\textsf{BQL}_\textsf{CD}(\Sigma)$. Since unsafe occurrences cannot be discharged, if $\alpha$ occurs unsafely in the center main subproof then $\alpha \in \Sigma'$, and the same goes for $\beta$ in the right main subproof. There are two subcases to consider.

\underline{Subcase 1} $\Gamma = \emptyset$. Then, by the induction hypothesis, we can find a proof of the form 
$$
\infer
	{\top \rightarrow \phi}
	{
	\infer*
		{\top \rightarrow \alpha \vee \beta}
		{\Sigma'}
	&
	\infer
		{\alpha \vee \beta \rightarrow \phi}
		{
		\infer*
			{\alpha \rightarrow \phi}
			{\Sigma'}
		&
		\infer*
			{\beta \rightarrow \phi}
			{\Sigma'}
		}
	}
$$
in $\mathcal{N}\textsf{BQL}_\textsf{CD}$.

\underline{Subcase 2} $\Gamma \neq \emptyset$. Then, by Distribution and the induction hypothesis, we can find a proof of the form
$$
\infer
	{\bigwedge\Gamma \wedge (\alpha \vee \beta) \rightarrow \phi}
	{
	\infer
		{\bigwedge\Gamma \wedge (\alpha \vee \beta) \rightarrow (\bigwedge \Gamma \wedge \alpha) \vee (\bigwedge \Gamma \wedge \beta)}
		{
		\infer*
			{(\bigwedge \Gamma \wedge \alpha) \vee (\bigwedge \Gamma \wedge \beta)}
			{[\bigwedge\Gamma \wedge (\alpha \vee \beta)]}
		}
	&
	\infer
		{(\bigwedge \Gamma \wedge \alpha) \vee (\bigwedge \Gamma \wedge \beta) \rightarrow \phi}
		{
		\infer*
			{(\bigwedge \Gamma \wedge \alpha) \rightarrow \phi}
			{\Sigma'}		
		&
		\infer*
			{(\bigwedge \Gamma \wedge \beta) \rightarrow \phi}
			{\Sigma'}
		}
	}
$$
in $\mathcal{N}\textsf{BQL}_\textsf{CD}$.  So, by the induction hypothesis, we can find a proof of the form 
$$
\infer
	{\bigwedge \Gamma \rightarrow \phi}
	{
	\infer
		{\bigwedge \Gamma \rightarrow \bigwedge \Gamma \wedge (\alpha \vee \beta)}
		{
		\infer
			{\bigwedge \Gamma \rightarrow \bigwedge \Gamma}
			{[\bigwedge \Gamma]}
		&
		\infer*
			{\bigwedge \Gamma \rightarrow \alpha \vee \beta}
			{\Sigma'}	
		}
	&
	\infer*
		{\bigwedge \Gamma \wedge (\alpha \vee \beta) \rightarrow \phi}
		{\Sigma'}
	}
$$
in $\mathcal{N}\textsf{BQL}_\textsf{CD}$. 

\underline{Case 4} Suppose we have a proof of the form
$$
\infer
	{\phi \rightarrow \psi}
	{
	\infer*
		{\psi}
		{\Sigma', \Gamma, [\phi]}
	}
$$
in $\mathcal{N}\textsf{BQL}_\textsf{CD}(\Sigma)$. Since unsafe occurrences cannot be discharged, if $\phi$ occurs unsafely in the main subproof then $\phi \in \Sigma'$. There are two subcases to consider.

\underline{Subcase 1} $\Gamma = \emptyset$. Then, by the induction hypothesis, we can find a proof of the form
$$
\infer
	{\top \rightarrow (\phi \rightarrow \psi)}
	{
	\infer*
		{\phi \rightarrow \psi}
		{\Sigma'}
	}
$$
in $\mathcal{N}\textsf{BQL}_\textsf{CD}$.

\underline{Subcase 2} $\Gamma \neq \emptyset$.  Then, by the induction hypothesis, we can find a proof of the form
$$
\infer
	{\bigwedge \Gamma \rightarrow (\phi \rightarrow \psi)}
	{
	\infer
		{\phi \rightarrow \psi}
		{
		\infer
			{\phi \rightarrow \bigwedge \Gamma \wedge \phi}
			{
			\infer
				{\bigwedge \Gamma \wedge \phi}
				{
				[\bigwedge \Gamma]
				&
				[\phi]
				}
			}
		&
		\infer*
			{\bigwedge \Gamma \wedge \phi \rightarrow \psi}
			{\Sigma'}
		}
	}
$$
in $\mathcal{N}\textsf{BQL}_\textsf{CD}$.

\underline{Case 5} Suppose we have a proof of the form
$$
\infer
	{\phi}
	{
	\infer*
		{\alpha}
		{\Sigma', \Gamma}
	&
	\infer
		{\alpha \rightarrow \phi}
		{
		\infer
			{\mathcal{N}\textsf{BQL}_\textsf{CD}}
			{\Sigma'}
		}
	}
$$
in $\mathcal{N}\textsf{BQL}_\textsf{CD}(\Sigma)$. Then, by the induction hypothesis, we can find a proof of the form
$$
\infer
	{\bigwedge \Gamma \rightarrow \phi}
	{
	\infer*
		{\bigwedge \Gamma \rightarrow \alpha}
		{\Sigma'}
	&
	\infer
		{\alpha \rightarrow \phi}
		{
		\infer
			{\mathcal{N}\textsf{BQL}_\textsf{CD}}
			{\Sigma'}
		}
	}
$$
in $\mathcal{N}\textsf{BQL}_\textsf{CD}$.

\underline{Case 6} Suppose we have a proof of the form
$$
\infer
	{\forall v \phi}
	{
	\infer*
		{\phi(a_i)}
		{\Sigma', \Gamma}
	}
$$
in $\mathcal{N}\textsf{BQL}_\textsf{CD}(\Sigma)$.  Let $\Sigma^* \subseteq \Sigma', \Gamma^* \subseteq \Gamma$ contain exactly the open assumptions in the main subproof.  Then $a_i$ does not occur in $\Sigma^* \cup \Gamma^* \cup \{\phi\}$. So, by the induction hypothesis, we can find a proof of the form
$$
\infer
	{\bigwedge \Gamma \rightarrow \forall v \phi}
	{
	\infer
		{\bigwedge \Gamma \rightarrow \bigwedge \Gamma^*}
		{
		\infer
			{\bigwedge \Gamma^*}
			{
			\infer
				{\wedge\text{-Ints}}
				{
				\infer
					{\wedge\text{-Elims}}
					{[\bigwedge \Gamma]}
				}
			}
		}
	&
	\infer
		{\bigwedge \Gamma^* \rightarrow \forall v \phi}
		{
		\infer
			{\forall v(\bigwedge \Gamma^* \rightarrow \phi)}
			{
			\infer*
				{\bigwedge \Gamma^* \rightarrow \phi(a_i)}
				{\Sigma^*}
			}
		}
	}
$$
in $\mathcal{N}\textsf{BQL}_\textsf{CD}$.

\underline{Case 7} Suppose we have a proof of the form
$$
\infer
	{\phi}
	{
	\infer*
		{\exists v \psi}
		{\Sigma', \Gamma}
	&
	\infer*
		{\phi}
		{\Sigma', \Gamma, [\psi(a_i)]}
	}
$$
in $\mathcal{N}\textsf{BQL}_\textsf{CD}(\Sigma)$.  Let $\Sigma^* \subseteq \Sigma', \Gamma^* \subseteq \Gamma$ contain exactly the open assumptions other than $\psi(a_i)$ in the right main subproof.  Then $a_i$ does not occur in $\Sigma^* \cup \Gamma^* \cup \{\psi, \phi\}$. Furthermore, since $\psi(a_i)$ does not occur unsafely in the right main subproof, $\Sigma^*$ contains all open assumptions which occur unsafely in the right main subproof.

\underline{Subcase 1} $\Gamma^* = \emptyset$.  Then, by the induction hypothesis, we can find a proof of the form
$$
\infer
	{\bigwedge \Gamma \rightarrow \phi}
	{
	\infer*
		{\bigwedge \Gamma \rightarrow \exists v \psi}
		{\Sigma'}
	&
	\infer
		{\exists v \psi \rightarrow \phi}
		{
		\infer
			{\forall v (\psi \rightarrow \phi)}
			{
			\infer*
				{\psi(a_i) \rightarrow \phi}
				{\Sigma^*}
			}
		}
	}
$$
in $\mathcal{N}\textsf{BQL}_\textsf{CD}$.

\underline{Subcase 2} $\Gamma^* \neq \emptyset$. Then, by Infinite Distribution and the induction hypothesis, we can find a proof of the form
$$
\infer
	{\bigwedge \Gamma \wedge \exists v \psi \rightarrow \phi}
	{
	\infer
		{\bigwedge \Gamma \wedge \exists v \psi \rightarrow \bigwedge \Gamma^* \wedge \exists v \psi}
		{
		\infer
			{\bigwedge \Gamma^* \wedge \exists v \psi}
			{
			\infer
				{\wedge\text{-Ints}}
				{
				\infer
					{\wedge\text{-Elims}}
					{[\bigwedge \Gamma \wedge \exists v \psi]}
				}
			}
		}
	&
	\infer
		{\bigwedge \Gamma^* \wedge \exists v \psi \rightarrow \phi}
		{
		\infer
			{\bigwedge \Gamma^* \wedge \exists v \psi \rightarrow \exists v(\bigwedge \Gamma^* \wedge \psi)}
			{
			\infer*
				{\exists v(\bigwedge \Gamma^* \wedge \psi)}
				{[\bigwedge \Gamma^* \wedge \exists v \psi]}
			}
		&
		\infer
			{\exists v(\bigwedge \Gamma^* \wedge \psi) \rightarrow \phi}
			{
			\infer
				{\forall v (\bigwedge \Gamma^* \wedge \psi \rightarrow \phi)}
				{
				\infer*
					{\bigwedge \Gamma^* \wedge \psi(a_i) \rightarrow \phi}
					{\Sigma^*}
				}
			}
		}
	}
$$
in $\mathcal{N}\textsf{BQL}_\textsf{CD}$.  So, by the induction hypothesis, we can find a proof of the form
$$
\infer
	{\bigwedge \Gamma \rightarrow \phi}
	{
	\infer
		{\bigwedge \Gamma \rightarrow \bigwedge \Gamma \wedge \exists v \psi}
		{
		\infer
			{\bigwedge \Gamma \rightarrow \bigwedge \Gamma}
			{[\bigwedge \Gamma]}
		&
		\infer*
			{\bigwedge \Gamma \rightarrow \exists v \psi}
			{\Sigma'}
		}
	&
	\infer*
		{\bigwedge \Gamma \wedge \exists v \psi \rightarrow \phi}
		{\Sigma^*}
	}
$$
in $\mathcal{N}\textsf{BQL}_\textsf{CD}$. 
\end{proof}

\begin{corollary} For $|\Gamma| < \omega$, if $\Sigma, \Gamma \vdash_\Sigma \phi$ then $\Sigma \vdash \bigwedge \Gamma \rightarrow \phi$.
\end{corollary}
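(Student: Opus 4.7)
The plan is to derive the corollary directly from the Relative Deduction lemma by taking $\Sigma' = \Sigma$. To do this, I need to verify that the hypothesis of the lemma is satisfied, namely that every open assumption occurring unsafely in the given proof belongs to $\Sigma$.

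Unpacking the hypothesis, $\Sigma, \Gamma \vdash_\Sigma \phi$ says we have a proof $\Pi$ of $\phi$ from $\Sigma \cup \Gamma$ in $\mathcal{N}\textsf{BQL}_\textsf{CD}(\Sigma)$. Every unsafe occurrence of an open assumption in $\Pi$ must lie inside a subproof of some $\phi \rightarrow \psi$ appearing on the right branch of an instance of the added rule. By the very specification of that rule, each such subproof is a plain $\mathcal{N}\textsf{BQL}_\textsf{CD}$ derivation of $\phi \rightarrow \psi$ whose open assumptions are drawn from $\Sigma$. Hence every open assumption of $\Pi$ that appears unsafely is already in $\Sigma$.

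With this observation in hand, I simply invoke Relative Deduction with $\Sigma' := \Sigma$ (and the same $\Gamma$), yielding $\Sigma \vdash \bigwedge \Gamma \rightarrow \phi$, as required. There is no real obstacle here: the work was already done in the lemma, and the only nontrivial step is noticing that the constraint built into the added rule of $\mathcal{N}\textsf{BQL}_\textsf{CD}(\Sigma)$ automatically guarantees that unsafe open assumptions are confined to $\Sigma$, which is precisely the side condition the lemma requires.
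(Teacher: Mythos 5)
Your proposal is correct and matches the paper's intent exactly: the corollary is meant to follow immediately from Relative Deduction by setting $\Sigma' = \Sigma$, and the only point needing comment is the one you make, namely that the added rule of $\mathcal{N}\textsf{BQL}_\textsf{CD}(\Sigma)$ forces the open assumptions of each right main subproof (hence all unsafely occurring open assumptions) to lie in $\Sigma$, so the lemma's side condition holds automatically.
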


\section{The Canonical Model}

In order to prove the existence of the canonical model, we need to assume that $\mathcal{L}$ is countable (see Relative Extension below for an explanation). Fortunately, we can use compactness to leverage up our canonical model proofs of completeness, the disjunction property and the existence property to $\mathcal{L}$ of arbitrary cardinality. 

A set of sentences $\Gamma \subseteq \mathcal{L}^+$ is called a prime saturated $\textsf{BQL}_\textsf{CD}$-theory iff $\Gamma$ satisfies the following properties:
\begin{align*}
&(\text{consistency}) &&\bot \not \in \Gamma \\
&(\textsf{BQL}_\textsf{CD}\text{-closure}) &&\text{if } \Gamma \vdash \phi \text{ then } \phi \in \Gamma \\
&(\text{disjunction property}) &&\text{if } \phi \vee \psi \in \Gamma \text{ then } \phi \in \Gamma \text{ or } \psi \in \Gamma \\
&(\text{existence property}) &&\text{if } \exists v \phi \in \Gamma \text{ then } \phi(t) \in \Gamma \text{ for some } t \in \mathcal{L}^+ \\
&(\text{totality property}) &&\text{if } \phi(t) \in \Gamma \text{ for every } t \in \mathcal{L}^+ \text{ then } \forall v \phi \in \Gamma.
\end{align*}
Let $Sat(\textsf{BQL}_\textsf{CD})$ denote the set of prime saturated $\textsf{BQL}_\textsf{CD}$-theories.  

\begin{lemma}[Extension] For $\Gamma$ such that $|\{i: a_i \not \in \Gamma\}| = \omega$: if $\Gamma \not \vdash \phi$ then there exists $\Gamma^* \supseteq \Gamma$ such that $\Gamma^* \in Sat(\textsf{BQL}_\textsf{CD})$ and $\phi \not \in \Gamma^*$.
\end{lemma}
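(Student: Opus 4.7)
The plan is to adapt the Gabbay--Shehtman--Skvortsov two-sided construction used for $\textsf{IQL}_\textsf{CD}$. In addition to the usual increasing ``inclusion'' chain $\Gamma_0 \subseteq \Gamma_1 \subseteq \cdots$ with $\Gamma_0 = \Gamma$, I will simultaneously build an ``exclusion'' chain $\Phi_0 \subseteq \Phi_1 \subseteq \cdots$ with $\Phi_0 = \{\phi\}$, maintaining the invariant that $\Gamma_n \not\vdash \bigvee \Phi_0$ for every finite nonempty $\Phi_0 \subseteq \Phi_n$.  The finite-disjunction strengthening (rather than the weaker ``$\Gamma_n \not\vdash \chi$ for all $\chi \in \Phi_n$'') is essential for handling disjunctions.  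Since $\mathcal{L}^+$ is countable, enumerate its sentences as $\phi_0, \phi_1, \ldots$

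At stage $n$, if adding $\phi_n$ to $\Gamma_n$ would violate the invariant then place $\phi_n$ in $\Phi_{n+1}$; otherwise place it in $\Gamma_{n+1}$.  In the latter case I also add witnesses: for $\phi_n = \psi \vee \chi$, include an invariant-preserving disjunct (at least one exists by a $\vee$-Elim argument with the $\bigvee$-invariant); for $\phi_n = \exists v \psi$, include $\psi(a_k)$ for a constant $a_k$ not yet occurring in $\Gamma_n \cup \Phi_n \cup \{\psi\}$, where invariant preservation is immediate from $\exists$-Elim and freshness.  In the former case, when $\phi_n = \forall v \psi$, I additionally place $\psi(a_k)$ in $\Phi_{n+1}$ for such a fresh $a_k$---this is the witness needed for the totality property.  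The hypothesis $|\{i : a_i \not\in \Gamma\}| = \omega$ guarantees an inexhaustible supply of fresh constants since only finitely many are consumed per stage, which is also the point at which countability of $\mathcal{L}$ is used.

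Setting $\Gamma^* = \bigcup_n \Gamma_n$ and $\Phi^* = \bigcup_n \Phi_n$, all five defining properties of a prime saturated $\textsf{BQL}_\textsf{CD}$-theory then follow.  Consistency and $\phi \notin \Gamma^*$ come from the invariant applied to $\{\phi\} \subseteq \Phi^*$; $\textsf{BQL}_\textsf{CD}$-closure holds because every sentence is processed, and anything $\Gamma^*$-provable cannot land in $\Phi^*$ without contradicting the invariant, so must lie in $\Gamma^*$; the disjunction and existence properties come from the explicit witnesses added in the inclusion case; and totality follows because each excluded universal $\forall v \psi \in \Phi^*$ is paired with an excluded instance $\psi(a_k) \notin \Gamma^*$.

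The main obstacle is verifying invariant preservation in the universal-exclusion step, and the argument I have in mind is the following.  Suppose for contradiction that $\Gamma_n \vdash \bigvee \Phi_0' \vee \psi(a_k)$ for some finite $\Phi_0' \subseteq \Phi_n$, with $a_k$ freshly chosen so that it occurs in none of $\Gamma_n, \Phi_n$, or $\psi$.  Then $\forall$-Int yields $\Gamma_n \vdash \forall v(\bigvee \Phi_0' \vee \psi)$, and the CD rule converts this to $\Gamma_n \vdash \bigvee \Phi_0' \vee \forall v \psi$; combining with the very fact that caused $\forall v \psi$ to be placed in $\Phi$---namely $\Gamma_n \cup \{\forall v \psi\} \vdash \bigvee \Phi_0''$ for some $\Phi_0'' \subseteq \Phi_n$---a final $\vee$-Elim derives $\Gamma_n \vdash \bigvee(\Phi_0' \cup \Phi_0'')$, contradicting the invariant at the previous stage.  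This is precisely the step where the constant domain axiom (via the CD rule) and the availability of fresh constants both come into play, and it is what distinguishes this construction from Visser's for $\textsf{BPL}$.
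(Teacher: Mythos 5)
Your proposal is correct and is essentially the proof the paper intends: the paper's own proof is a one-line citation of the Belnap pair-extension construction with witnesses drawn from the fresh constants $\{a_i\}_{i \in \omega}$, which is exactly the two-sided $\Gamma_n/\Phi_n$ construction you carry out (and it parallels the paper's detailed Relative Extension argument, with fresh constants playing the role there played by the witness sets $\Pi^L_n(\psi)$ and $\Pi^R_n(\psi)$). Your verifications of the universal-exclusion step via $\forall$-Int followed by CD, and of the existential-inclusion step via $\exists$-Elim with a fresh $a_k$, correctly supply the details the paper leaves to the reader.
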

\begin{proof} Similar to the proof of the Belnap Extension Lemma (see e.g. Priest [4] \S 6.2), except we draw witnesses from $\{a_i\}_{i \in \omega}$. Since $\mathcal{L}$ is countable, the assumption that $|\{i: a_i \not \in \Gamma\}| = \omega$ ensures we never run out of witnesses.
\end{proof}

\begin{lemma}[$\exists$-Witness] For $\Sigma \in Sat(\textsf{BQL}_\textsf{CD})$, $|\Gamma| < \omega$: if $\Sigma, \Gamma, \psi(t) \vdash_\Sigma \phi$ for every $t \in \mathcal{L}^+$ then $\Sigma, \Gamma, \exists v \psi \vdash_\Sigma \phi$.
\end{lemma}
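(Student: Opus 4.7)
The plan is to bypass the usual hunt for a fresh witness constant needed by $\exists$-Elim and instead reduce everything to membership in $\Sigma$, exploiting the totality property of prime saturated theories alongside Internal $\exists$-Elim. First I would feed each hypothesis $\Sigma, \Gamma, \psi(t) \vdash_\Sigma \phi$ into the Relative Deduction Corollary to obtain $\Sigma \vdash \bigwedge \Gamma \wedge \psi(t) \rightarrow \phi$ for every $t \in \mathcal{L}^+$, and then use $\textsf{BQL}_\textsf{CD}$-closure of $\Sigma$ to place every such conditional inside $\Sigma$.

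Next, since $\Gamma$ is a set of sentences and $\phi$ is a sentence, the formula $\chi(v) := \bigwedge \Gamma \wedge \psi(v) \rightarrow \phi$ has at most $v$ free, so the totality property promotes the family $\{\chi(t) : t \in \mathcal{L}^+\} \subseteq \Sigma$ to $\forall v \chi \in \Sigma$. Applying Internal $\exists$-Elim inside $\Sigma$ (via $\textsf{BQL}_\textsf{CD}$-closure) then gives $\exists v(\bigwedge \Gamma \wedge \psi) \rightarrow \phi \in \Sigma$. Combining this with the $\textsf{BQL}_\textsf{CD}$-theorem $\bigwedge \Gamma \wedge \exists v \psi \rightarrow \exists v(\bigwedge \Gamma \wedge \psi)$, which one gets by $\rightarrow$-Int from Infinite Distribution, via Internal Transitivity yields $\bigwedge \Gamma \wedge \exists v \psi \rightarrow \phi \in \Sigma$.

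Finally, to assemble the required $\mathcal{N}\textsf{BQL}_\textsf{CD}(\Sigma)$-proof, I would start from the open assumptions $\Gamma \cup \{\exists v \psi\}$, derive $\bigwedge \Gamma \wedge \exists v \psi$ by $\wedge$-Ints, and invoke the $\Sigma$-rule with a one-line $\mathcal{N}\textsf{BQL}_\textsf{CD}$-proof of $\bigwedge \Gamma \wedge \exists v \psi \rightarrow \phi$ from $\Sigma$ (using this sentence as a single leaf). This yields $\phi$, establishing $\Sigma, \Gamma, \exists v \psi \vdash_\Sigma \phi$. The main subtlety is justifying the invocation of the totality property, which depends on carefully checking that $\chi(v)$ really has $v$ as its only free variable; everything else is a routine assembly of derivations, and crucially we never invoke $\exists$-Elim or $\forall$-Int at the object level, so no freshness obligation for any $a_i$ ever arises.
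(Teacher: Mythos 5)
Your proposal is correct and follows essentially the same route as the paper's own proof: Relative Deduction to internalize the hypotheses as conditionals derivable from $\Sigma$, the totality property (via $\textsf{BQL}_\textsf{CD}$-closure) to obtain $\forall v(\bigwedge \Gamma \wedge \psi \rightarrow \phi)$, then Internal $\exists$-Elim and Infinite Distribution to reach $\bigwedge \Gamma \wedge \exists v \psi \rightarrow \phi$, and finally the $\Sigma$-rule to reassemble the $\vdash_\Sigma$ derivation. The only differences are cosmetic: the paper splits into the cases $\Gamma = \emptyset$ and $\Gamma \neq \emptyset$ to keep the conjunctions tidy, and leaves the final reassembly implicit, whereas you spell it out.
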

\begin{proof} Suppose $\Sigma, \Gamma, \psi(t) \vdash_\Sigma \phi$ for every $t \in \mathcal{L}^+$.  There are two cases.

\underline{Case 1} $\Gamma = \emptyset$. Then, by Relative Deduction, $\Sigma \vdash \psi(t) \rightarrow \phi$ for every $t \in \mathcal{L}^+$. Since $\Sigma \in Sat(\textsf{BQL}_\textsf{CD})$, $\Sigma \vdash \forall v(\psi \rightarrow \phi)$. So $\Sigma \vdash \exists v \psi \rightarrow \phi$.  But then $\Sigma, \exists v \psi \vdash_\Sigma \phi$.

\underline{Case 2} $\Gamma \neq \emptyset$. Then, by Relative Deduction, $\Sigma \vdash \bigwedge \Gamma \wedge \psi(t) \rightarrow \phi$ for every $t \in \mathcal{L}^+$. Since $\Sigma \in Sat(\textsf{BQL}_\textsf{CD})$, $\Sigma \vdash \forall v(\bigwedge \Gamma \wedge \psi \rightarrow \phi)$. So $\Sigma \vdash \exists v(\bigwedge \Gamma \wedge \psi) \rightarrow \phi$. But then, by Infinite Distributivity, $\Sigma \vdash \bigwedge \Gamma \wedge \exists v \psi \rightarrow \phi$.  So $\Sigma, \Gamma, \exists v \psi \vdash_\Sigma \phi$.
\end{proof}

\begin{lemma}[$\forall$-Witness] For $\Sigma \in Sat(\textsf{BQL}_\textsf{CD})$, $|\Gamma| < \omega$: if $\Sigma, \Gamma \vdash_\Sigma \phi \vee \psi(t)$ for every $t \in \mathcal{L}^+$ then $\Sigma, \Gamma \vdash_\Sigma \phi \vee \forall v \psi$.
\end{lemma}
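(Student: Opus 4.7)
The plan is to mirror the structure of the $\exists$-Witness proof: convert the hypothesized relative derivations into $\mathcal{N}\textsf{BQL}_\textsf{CD}$-provable conditionals via Relative Deduction, universally quantify using the totality property of $\Sigma$, push the quantifier past the antecedent with Internal $\forall$-Int, then swap $\forall v(\phi \vee \psi)$ for $\phi \vee \forall v \psi$ using CD, and finally step back into $\mathcal{N}\textsf{BQL}_\textsf{CD}(\Sigma)$ via the special rule.

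In detail, suppose $\Sigma, \Gamma \vdash_\Sigma \phi \vee \psi(t)$ for every $t \in \mathcal{L}^+$. By Relative Deduction, $\Sigma \vdash \bigwedge \Gamma \rightarrow \phi \vee \psi(t)$ for every $t$, so by $\textsf{BQL}_\textsf{CD}$-closure each such conditional lies in $\Sigma$. Since $\bigwedge \Gamma$ and $\phi$ are sentences, the formula $\bigwedge \Gamma \rightarrow \phi \vee \psi(t)$ coincides with $(\bigwedge \Gamma \rightarrow \phi \vee \psi)(t)$, so by totality $\forall v(\bigwedge \Gamma \rightarrow \phi \vee \psi) \in \Sigma$. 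Internal $\forall$-Int then yields $\bigwedge \Gamma \rightarrow \forall v(\phi \vee \psi)$, and a single application of CD to $[\forall v(\phi \vee \psi)]$ followed by $\rightarrow$-Int supplies $\vdash \forall v(\phi \vee \psi) \rightarrow \phi \vee \forall v \psi$; Internal Transitivity now produces $\Sigma \vdash \bigwedge \Gamma \rightarrow \phi \vee \forall v \psi$. Supplying $\bigwedge \Gamma$ from $\Gamma$ by iterated $\wedge$-Int (or by $\top$-Int in the degenerate case $\Gamma = \emptyset$) and invoking the special rule of $\mathcal{N}\textsf{BQL}_\textsf{CD}(\Sigma)$ yields $\Sigma, \Gamma \vdash_\Sigma \phi \vee \forall v \psi$.

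I do not anticipate any serious obstacle. The CD rule together with the totality property of $\Sigma$ is tailored for exactly this kind of manoeuvre, and the present lemma may be viewed as the syntactic counterpart of the $\textsf{BQL}_\textsf{CD}$-valid-but-$\textsf{BQL}$-invalid inference displayed in the introduction. Unlike the $\exists$-Witness argument, the cases $\Gamma = \emptyset$ and $\Gamma \neq \emptyset$ need not be treated separately, since the antecedent $\bigwedge \Gamma$ passes through all the $\forall$/CD manipulations untouched.
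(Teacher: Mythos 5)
Your proof is correct and follows essentially the same route as the paper's: Relative Deduction, then closure plus totality to obtain $\forall v(\bigwedge \Gamma \rightarrow \phi \vee \psi) \in \Sigma$, then Internal $\forall$-Int, CD, and the special rule of $\mathcal{N}\textsf{BQL}_\textsf{CD}(\Sigma)$ to return to $\vdash_\Sigma$. You merely spell out more explicitly the steps the paper compresses (the $\rightarrow$-Int/Internal Transitivity detour for applying CD under the antecedent), and your observation that no case split on $\Gamma = \emptyset$ is needed matches the paper's presentation.
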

\begin{proof} Suppose $\Sigma, \Gamma \vdash_\Sigma \phi \vee \psi(t)$ for every $t \in \mathcal{L}^+$.  Then, by Relative Deduction, $\Sigma \vdash \bigwedge \Gamma \rightarrow \phi \vee \psi(t)$ for every $t \in \mathcal{L}^+$. Since $\Sigma \in Sat(\textsf{BQL}_\textsf{CD})$, $\Sigma \vdash \forall v(\bigwedge \Gamma \rightarrow \phi \vee \psi)$.  So $\Sigma \vdash \bigwedge \Gamma \rightarrow \forall v(\phi \vee \psi)$. But then, by CD, $\Sigma \vdash \bigwedge \Gamma \rightarrow \phi \vee \forall v \psi$.  Hence $\Sigma, \Gamma \vdash_\Sigma \phi \vee \forall v \psi$.
\end{proof}

We define $Sat(\textsf{BQL}_\textsf{CD}(\Sigma))$ analogously to $Sat(\textsf{BQL}_\textsf{CD})$.

\begin{lemma}[Relative Extension] For $\Sigma \in Sat(\textsf{BQL}_\textsf{CD})$, $|\Gamma| < \omega$: if $\Sigma, \Gamma \not \vdash_\Sigma \phi$ then there exists $(\Sigma \cup \Gamma)^* \supseteq \Sigma \cup \Gamma$ such that $(\Sigma \cup \Gamma)^* \in Sat(\textsf{BQL}_\textsf{CD}(\Sigma))$ and $\phi \not \in (\Sigma \cup \Gamma)^*$.
\end{lemma}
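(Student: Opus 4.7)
The approach is a Henkin-style saturation carried out in $\vdash_\Sigma$, analogous to the proof of the Extension Lemma, with $\exists$-Witness and $\forall$-Witness supplying quantifier witnesses. Fix an enumeration $\phi_0, \phi_1, \ldots$ of all $\mathcal{L}^+$-sentences; since $\mathcal{L}$ is countable and $\Gamma$ is finite, infinitely many $a_i$ remain unused throughout, as in the Extension Lemma. Starting from $\Gamma_0 := \Gamma$ and $E_0 := \{\phi\}$, I build an increasing chain of finite pairs $(\Gamma_n, E_n)$ satisfying the invariant $\Sigma, \Gamma_n \not\vdash_\Sigma \bigvee E_n$; at the limit, $(\Sigma \cup \Gamma)^* := \Sigma \cup \bigcup_n \Gamma_n$.

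At stage $n$ I test whether $\Sigma, \Gamma_n \vdash_\Sigma \bigvee E_n \vee \phi_n$. In the inclusion case (yes), I set $\Gamma_{n+1} := \Gamma_n \cup \{\phi_n\}$; the invariant is preserved, since if also $\Sigma, \Gamma_n, \phi_n \vdash_\Sigma \bigvee E_n$, then the Corollary to Relative Deduction yields both $\Sigma \vdash \bigwedge \Gamma_n \to \bigvee E_n \vee \phi_n$ and $\Sigma \vdash \bigwedge \Gamma_n \wedge \phi_n \to \bigvee E_n$, whence by Distribution, Internal $\vee$-Elim and a final application of the extra rule of $\mathcal{N}\textsf{BQL}_\textsf{CD}(\Sigma)$ we conclude $\Sigma, \Gamma_n \vdash_\Sigma \bigvee E_n$, contradicting the invariant. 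Further, if $\phi_n = \alpha \vee \beta$ I select a disjunct preserving the invariant by a similar Corollary-based argument; if $\phi_n = \exists v \psi$, $\exists$-Witness (contrapositively) supplies $t$ with $\Sigma, \Gamma_n, \psi(t) \not\vdash_\Sigma \bigvee E_n$, and $\psi(t)$ is added to $\Gamma_{n+1}$ as well.

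In the exclusion case (no), I set $E_{n+1} := E_n \cup \{\phi_n\}$, which is safe by construction. The main obstacle is ensuring the totality property, handled here: if $\phi_n = \forall v \eta$, then $\Sigma, \Gamma_n \not\vdash_\Sigma \bigvee E_n \vee \forall v \eta$ together with $\forall$-Witness (contrapositively) yields $t$ with $\Sigma, \Gamma_n \not\vdash_\Sigma \bigvee E_n \vee \eta(t)$, and I add $\eta(t)$ to $E_{n+1}$ alongside $\forall v \eta$.

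The five saturation conditions of $(\Sigma \cup \Gamma)^* \in Sat(\textsf{BQL}_\textsf{CD}(\Sigma))$ then follow routinely: consistency and $\phi \notin (\Sigma \cup \Gamma)^*$ come from $\bot, \phi \in E_\infty$ together with the limiting invariant; $\vdash_\Sigma$-closure holds because any $\psi$ derivable from $\Sigma \cup (\Sigma \cup \Gamma)^*$ is derivable from some finite $\Sigma \cup \Gamma_n$ by the finiteness of proofs, and hence cannot enter $E_\infty$ without contradicting the invariant, forcing $\psi \in \Gamma^*$ via the inclusion sub-step at the stage where $\psi$ is enumerated; the disjunction, existence, and totality properties follow from the sub-steps above --- in particular, $\forall v \eta \notin (\Sigma \cup \Gamma)^*$ puts $\forall v \eta \in E_\infty$ and hence some $\eta(t) \in E_\infty$ by the totality sub-step, so $\eta(t) \notin (\Sigma \cup \Gamma)^*$ as required.
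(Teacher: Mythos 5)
Your proposal is correct and follows essentially the same route as the paper: a pair-extension over an enumeration of $\mathcal{L}^+$-sentences maintaining the invariant $\Sigma, \Gamma_n \not\vdash_\Sigma \bigvee \Delta_n$, with the Corollary to Relative Deduction used to juggle the invariant and $\exists$-Witness / $\forall$-Witness supplying the quantifier witnesses on the left and right respectively. The one (harmless) divergence is the tie-break: the paper adds $\phi_n$ to the theory whenever $\Sigma, \Gamma_n, \phi_n \not\vdash_\Sigma \bigvee \Delta_n$, whereas you add it only when $\Sigma, \Gamma_n \vdash_\Sigma \bigvee \Delta_n \vee \phi_n$ forces you to --- both choices preserve the invariant and yield the required prime saturated theory --- and your opening remark about unused constants $a_i$ is unnecessary here, since witnesses are drawn from arbitrary closed terms via the Witness lemmas rather than from fresh constants.
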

\begin{proof} Suppose $\Sigma, \Gamma \not \vdash_\Sigma \phi$.  Since $\mathcal{L}$ is countable, we can fix an enumeration $\{\phi_n\}_{n \in \omega}$ of $\mathcal{L}^+$-sentences.  We then inductively define a pair $\{\Gamma_n\}_{n \in \omega}, \{\Delta_n\}_{n \in \omega}$ of increasing sequences of sets of sentences $\Gamma_n, \Delta_n \subseteq \mathcal{L}^+$ as follows, where $\Pi^L_n(\psi) = \{t \in \mathcal{L}^+: \Sigma, \Gamma_n, \psi(t) \not \vdash_\Sigma \bigvee \Delta_n\}$ and $\Pi^R_n(\psi) = \{t \in \mathcal{L}^+: \Sigma, \Gamma_n \not \vdash_\Sigma \bigvee \Delta_n \vee \psi(t)\}$:
\begin{align*}
\Gamma_0 &= \Gamma \\
\Gamma_{n + 1} &= \begin{cases}
					\Gamma_n \text{ if } \Sigma, \Gamma_n, \phi_n \vdash_\Sigma \bigvee \Delta_n \\
					\Gamma_n \cup \{\phi_n\} \text{ if } \Sigma, \Gamma_n, \phi_n \not \vdash_\Sigma \bigvee \Delta_n \text{ and } \phi_n \neq \exists v \psi \\
					\Gamma_n \cup \{\phi_n, \psi(t)\} \text{ for } t \in \Pi^L_n(\psi) \text{ if } \Sigma, \Gamma_n, \phi_n \not \vdash_\Sigma \bigvee \Delta_n \text{ and } \phi_n = \exists v \psi
					\end{cases}		\\
\Delta_0 &= \{\phi\} \\
\Delta_{n + 1} &= \begin{cases}
					\Delta_n \text{ if } \Sigma, \Gamma_n, \phi_n \not \vdash_\Sigma \bigvee \Delta_n \\
					\Delta_n \cup \{\phi_n\} \text{ if } \Sigma, \Gamma_n, \phi_n \vdash_\Sigma \bigvee \Delta_n \text{ and } \phi_n \neq \forall v \psi \\
					\Delta_n \cup \{\phi_n, \psi(t)\} \text{ for } t \in \Pi^R_n(\psi) \text{ if } \Sigma, \Gamma_n, \phi_n \vdash_\Sigma \bigvee \Delta_n \text{ and } \phi_n = \forall v \psi.
					\end{cases}
\end{align*}
In order for this construction to be well-defined, we require $\Pi^L_n(\psi) \neq \emptyset$ at the stages where we choose $t \in \Pi^L_n(\psi)$ (likewise for $\Pi^R_n(\psi)$).  We prove this by appealing to $\exists$-Witness ($\forall$-Witness), which requires $|\Gamma_n| < \omega$.  Hence $\mathcal{L}$ must be countable, for otherwise we would need to iterate the above construction into the transfinite.

\begin{subclaim}[Separation] For all $n$: (i) $\Gamma_n, \Delta_n$ exist, (ii) $|\Gamma_n|, |\Delta_n| < \omega$ and (iii) $\Sigma, \Gamma_n \not \vdash_\Sigma \bigvee \Delta_n$.
\end{subclaim}
\begin{subproof} By induction on $n$.  The base case is immediate.  

\underline{Induction Step} There are two cases.

\underline{Case 1} $\Sigma, \Gamma_n, \phi_n \not \vdash_\Sigma \bigvee \Delta_n$.  Then $\Delta_{n+1} = \Delta_n$.  So if $\phi_n \neq \exists v \psi$ then $\Gamma_{n + 1} = \Gamma_n \cup \{\phi_n\}$ and we're done.  Suppose, then, that $\phi_n = \exists v \psi$.  Then, by $\exists$-Witness, $\Pi^L_n(\psi) \neq \emptyset$ and so $\Gamma_{n + 1} = \Gamma_n \cup \{\exists v \psi, \psi(t)\}$ for some $t \in \Pi^L_n(\psi)$ exists.  Suppose for a reductio that $\Sigma, \Gamma_{n + 1} \vdash_\Sigma \bigvee \Delta_{n + 1}$.  Then, by Relative Deduction, $\Sigma \vdash \bigwedge \Gamma_{n + 1} \rightarrow \bigvee \Delta_{n + 1}$. There are two subcases.

\underline{Subcase 1} $\Gamma_n = \emptyset$. Then we can construct the following proof in $\mathcal{N}\textsf{BQL}_\textsf{CD}(\Sigma)$:
$$
\infer
	{\bigvee \Delta_n}
	{
	\infer
		{\bigwedge \Gamma_{n+1}}
		{
		\psi(t)
		&
		\infer
			{\exists v \psi}
			{\psi(t)}
		}
	&
	\infer
		{\bigwedge \Gamma_{n+1} \rightarrow \bigvee \Delta_n}
		{
		\infer
			{\mathcal{N}\textsf{BQL}_\textsf{CD}}
			{\Sigma}
		}
	}
$$
which contradicts the fact that $t \in \Pi^L_n(\psi)$.

\underline{Subcase 2} $\Gamma_n \neq \emptyset$.  Then we can construct the following proof in $\mathcal{N}\textsf{BQL}_\textsf{CD}(\Sigma)$:
$$
\infer
	{\bigvee \Delta_n}
	{
	\infer
		{\bigwedge \Gamma_n \wedge \psi(t)}
		{
		\infer
			{\wedge\text{-Ints}}
			{\Gamma_n, \psi(t)}
		}
	&
	\infer
		{\bigwedge \Gamma_n \wedge \psi(t) \rightarrow \bigvee \Delta_n}
		{
		\infer
			{\bigwedge \Gamma_n \wedge \psi(t) \rightarrow \bigwedge \Gamma_{n + 1}}
			{
			\infer
				{\bigwedge \Gamma_{n + 1}}
				{
				[\bigwedge \Gamma_n \wedge \psi(t)]
				&
				\infer
					{\exists v \psi}
					{
					\infer
						{\psi(t)}
						{[\bigwedge \Gamma_n \wedge \psi(t)]}
					}
				}
			}
		&
		\infer
			{\bigwedge \Gamma_{n + 1} \rightarrow \bigvee \Delta_n}
			{
			\infer
				{\mathcal{N}\textsf{BQL}_\textsf{CD}}
				{\Sigma}
			}
		}
	}
$$
which contradicts the fact that $t \in \Pi^L_n(\psi)$.

\underline{Case 2} $\Sigma, \Gamma_n, \phi_n \vdash_\Sigma \bigvee \Delta_n$.  Then $\Gamma_{n+1} = \Gamma_n$.  Also, by the induction hypothesis, $\phi_n \not \in \Sigma$. There are two subcases.

\underline{Subcase 1}  $\phi_n \neq \forall v \psi$. Then $\Delta_{n + 1} = \Delta_n \cup \{\phi_n\}$.  Suppose for a reductio that $\Sigma, \Gamma_{n + 1} \vdash_\Sigma \bigvee \Delta_{n + 1}$. Since $\phi_n \not \in \Sigma$, $\phi_n$ never occurs unsafely in a proof in $\mathcal{N}\textsf{BQL}_\textsf{CD}(\Sigma)$. So we can construct the following proof in $\mathcal{N}\textsf{BQL}_\textsf{CD}(\Sigma)$:
$$
\infer
	{\bigvee \Delta_n}
	{
	\infer*
		{\bigvee \Delta_n \vee \phi_n}
		{\Sigma, \Gamma_n}
	&
	[\bigvee \Delta_n]
	&
	\infer*
		{\bigvee \Delta_n}
		{\Sigma, \Gamma_n, [\phi_n]}
	}
$$
which contradicts the induction hypothesis.

\underline{Subcase 2} $\phi_n = \forall v \psi$.  By the same argument as Subcase 1: $\Sigma, \Gamma_n \not \vdash_\Sigma \bigvee \Delta_n \vee \forall v \psi$.  So, by $\forall$-Witness, $\Pi^R_n(\psi) \neq \emptyset$ and hence $\Delta_{n + 1} = \Delta_n \cup \{\forall v \psi, \psi(t)\}$ for some $t \in \Pi^R_n(\psi)$ exists.  Suppose for a reductio that $\Sigma, \Gamma_{n + 1} \vdash_\Sigma \bigvee \Delta_{n + 1}$.  Then we can construct the following proof in $\mathcal{N}\textsf{BQL}_\textsf{CD}(\Sigma)$:
$$
\infer
	{\bigvee \Delta_n \vee \psi(t)}
	{
	\infer*
		{(\bigvee \Delta_n \vee \psi(t)) \vee \forall v \psi}
		{\Sigma, \Gamma_n}
	&
	[\bigvee \Delta_n \vee \psi(t)]
	&
	\infer
		{\bigvee \Delta_n \vee \psi(t)}
		{
		\infer
			{\psi(t)}
			{[\forall v \psi]}
		}
	}
$$
which contradicts the fact that $t \in \Pi^R_n(\psi)$.  
\end{subproof}

Let $(\Sigma \cup \Gamma)^* = \Sigma \cup \bigcup_{n \in \omega} \Gamma_n$.  It is easy to verify using Separation that $(\Sigma \cup \Gamma)^*$ is the desired prime saturated $\textsf{BQL}_\textsf{CD}(\Sigma)$-theory.
\end{proof}

The canonical frame (for $\textsf{BQL}_\textsf{CD}$) is $\langle Sat(\textsf{BQL}_\textsf{CD}), \prec \rangle$, where $\Sigma \prec \Gamma$ iff for all $\phi, \psi$: if $\phi \rightarrow \psi \in \Sigma$ and $\phi \in \Gamma$ then $\psi \in \Gamma$.  By soundness, $R^n(a_1,...,a_n) \not \vdash \bot$.  So, by Extension, $Sat(\textsf{BQL}_\textsf{CD})$ is non-empty. 

\begin{lemma}[Subset] If $\Sigma \prec \Gamma$ then $\Sigma \subseteq \Gamma$.
\end{lemma}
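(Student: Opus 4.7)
The plan is to show that every $\phi \in \Sigma$ also lies in $\Gamma$ by producing, for each such $\phi$, a conditional in $\Sigma$ whose antecedent is guaranteed to live in $\Gamma$. The obvious choice is $\top \rightarrow \phi$, paired with the antecedent $\top$.

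First I would verify that $\top \in \Gamma$. Since $\top$-Int gives a one-line proof of $\top$ from no premises, we have $\vdash \top$, so $\textsf{BQL}_\textsf{CD}$-closure of $\Gamma$ forces $\top \in \Gamma$. Next I would verify that $\top \rightarrow \phi \in \Sigma$ whenever $\phi \in \Sigma$. Starting from the open assumption $\phi$, a single application of $\rightarrow$-Int (vacuously discharging an assumption $\top$) yields the proof
$$
\infer[(\rightarrow\text{-Int})]{\top \rightarrow \phi}{\phi}
$$
in $\mathcal{N}\textsf{BQL}_\textsf{CD}$, so $\Sigma \vdash \top \rightarrow \phi$, and $\textsf{BQL}_\textsf{CD}$-closure of $\Sigma$ delivers $\top \rightarrow \phi \in \Sigma$.

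Finally, given $\top \rightarrow \phi \in \Sigma$ and $\top \in \Gamma$, the hypothesis $\Sigma \prec \Gamma$ applied to the pair $\langle \top, \phi \rangle$ immediately gives $\phi \in \Gamma$. Since $\phi \in \Sigma$ was arbitrary, this yields $\Sigma \subseteq \Gamma$. There is no real obstacle here; the only subtlety is noticing that the definition of $\prec$ is used with $\top$ as a universally available antecedent, which is what makes $\prec$ refine $\subseteq$ despite the absence of modus ponens in $\textsf{BQL}_\textsf{CD}$.
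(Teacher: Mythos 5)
Your proof is correct and follows exactly the same route as the paper's: fix $\phi \in \Sigma$, note $\top \rightarrow \phi \in \Sigma$ and $\top \in \Gamma$, and apply the definition of $\prec$. The only difference is that you spell out the closure arguments (the vacuous-discharge $\rightarrow$-Int for $\top \rightarrow \phi$ and $\top$-Int for $\top$) that the paper leaves implicit.
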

\begin{proof} Suppose $\Sigma \prec \Gamma$ and $\phi \in \Sigma$.  Then $\top \rightarrow \phi \in \Sigma$.  So, since $\top \in \Gamma$, $\phi \in \Gamma$.
\end{proof}

To verify transitivity, suppose $\Sigma \prec \Gamma \prec \Delta$, $\phi \rightarrow \psi \in \Sigma$ and $\phi \in \Delta$.  Then, by Subset, $\phi \rightarrow \psi \in \Gamma$.  So $\psi \in \Delta$.  Hence the canonical frame is in fact a transitive frame. The canonical model (for $\textsf{BQL}_\textsf{CD}$) is $\mathfrak{C} = \langle Sat(\textsf{BQL}_\textsf{CD}), \prec, T, |\mathord{\cdot}| \rangle$, where $T$ is the set of closed $\mathcal{L}^+$-terms and
\begin{align*}
|c| &= c \\
|f^n|(t_1,...,t_n) &= f^n(t_1,...,t_n) \\
|R^n|(\Sigma) &= \{\langle t_1,...,t_n \rangle: R^n(t_1,...,t_n) \in \Sigma\}.
\end{align*}
By Subset, we have
\begin{align*}
\Sigma \prec \Gamma &\implies \Sigma \subseteq \Gamma \\
&\implies |R^n|(\Sigma) \subseteq |R^n|(\Gamma).
\end{align*}
So $\mathfrak{C}$ is in fact an $\mathcal{L}^+$-model.  

\begin{lemma}[Truth] $\mathfrak{C}, \Sigma \Vdash \phi$ iff $\phi \in \Sigma$.
\end{lemma}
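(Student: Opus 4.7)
The plan is to proceed by induction on the construction of $\phi$. The base cases are immediate: $\top \in \Sigma$ by $\textsf{BQL}_\textsf{CD}$-closure under $\top$-Int, $\bot \notin \Sigma$ by consistency (so both $\Sigma \Vdash \top$ and $\Sigma \not\Vdash \bot$ hold universally), and for atomic $R^n(t_1,\ldots,t_n)$ the equivalence is simply the definition of $|R^n|(\Sigma)$ in $\mathfrak{C}$, using that $|t_i|() = t_i$ for closed terms by a trivial induction on term structure.

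For $\wedge$ the step is mechanical via $\wedge$-Int and $\wedge$-Elim. For $\vee$, the forward direction uses $\vee$-Int while the backward direction invokes the disjunction property. The $\exists$ step uses $\exists$-Int in one direction and the existence property in the other, observing that any witness $t$ produced by the existence property is a closed $\mathcal{L}^+$-term and therefore lives in the canonical domain $T$. The $\forall$ step uses $\forall$-Elim going from the theory to the semantics and the totality property going the other way; it is essential here that $T$ is exactly the set of closed $\mathcal{L}^+$-terms, so that checking $\phi(t)$ at every $t \in T$ matches the hypothesis of totality.

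The main obstacle is the $\rightarrow$ step. The backward direction is easy: if $\phi \rightarrow \psi \in \Sigma$ and $\Gamma \succ \Sigma$ with $\phi \in \Gamma$ (by IH), then $\psi \in \Gamma$ by the definition of $\prec$, so $\Gamma \Vdash \psi$ by IH. For the forward direction I argue by contrapositive. Assume $\phi \rightarrow \psi \notin \Sigma$; the goal is to exhibit a successor $\Gamma \succ \Sigma$ with $\phi \in \Gamma$ and $\psi \notin \Gamma$. First, $\Sigma, \phi \not\vdash_\Sigma \psi$, since otherwise the Corollary to Relative Deduction would give $\Sigma \vdash \phi \rightarrow \psi$, forcing $\phi \rightarrow \psi \in \Sigma$ by $\textsf{BQL}_\textsf{CD}$-closure. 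Applying Relative Extension to the finite set $\{\phi\}$ then yields $\Gamma \supseteq \Sigma \cup \{\phi\}$ in $Sat(\textsf{BQL}_\textsf{CD}(\Sigma))$ with $\psi \notin \Gamma$.

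Two loose ends remain. First, I need $\Gamma \in Sat(\textsf{BQL}_\textsf{CD})$, not merely $Sat(\textsf{BQL}_\textsf{CD}(\Sigma))$; this is immediate because $\vdash \subseteq \vdash_\Sigma$, so each $\textsf{BQL}_\textsf{CD}(\Sigma)$-saturation clause implies its $\textsf{BQL}_\textsf{CD}$ analogue. Second, I must check $\Sigma \prec \Gamma$, and this is precisely where the extra inference rule of $\mathcal{N}\textsf{BQL}_\textsf{CD}(\Sigma)$ earns its keep: if $\alpha \rightarrow \beta \in \Sigma$ and $\alpha \in \Gamma$, then that rule, instantiated with the one-line $\mathcal{N}\textsf{BQL}_\textsf{CD}$-proof of $\alpha \rightarrow \beta$ from $\Sigma$, gives $\Gamma \vdash_\Sigma \beta$, so $\beta \in \Gamma$ by $\textsf{BQL}_\textsf{CD}(\Sigma)$-closure. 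The induction hypothesis then delivers $\Gamma \Vdash \phi$ and $\Gamma \not\Vdash \psi$, so $\Sigma \not\Vdash \phi \rightarrow \psi$, as desired.
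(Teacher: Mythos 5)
Your proposal is correct and follows essentially the same route as the paper: induction on sentence complexity, with the $\rightarrow$ forward direction handled by passing from $\phi \rightarrow \psi \notin \Sigma$ to $\Sigma, \phi \not\vdash_\Sigma \psi$ via (the Corollary to) Relative Deduction and then applying Relative Extension. The two ``loose ends'' you tie up --- that $Sat(\textsf{BQL}_\textsf{CD}(\Sigma)) \subseteq Sat(\textsf{BQL}_\textsf{CD})$ and that the added rule of $\mathcal{N}\textsf{BQL}_\textsf{CD}(\Sigma)$ forces $\Sigma \prec (\Sigma \cup \{\phi\})^*$ --- are exactly the steps the paper leaves implicit, and you justify them correctly.
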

\begin{proof} By induction on the complexity of $\mathcal{L}^+$-sentences.  The base case is easy.  The induction steps are also easy except for $\rightarrow$. 

\underline{$\rightarrow$} $\impliedby$ Easy.

$\implies$ Suppose $\phi \rightarrow \psi \not \in \Sigma$.  Then $\Sigma \not \vdash \phi \rightarrow \psi$.  By Relative Deduction: $\Sigma, \phi \not \vdash_\Sigma \psi$.  Hence, by Relative Extension, there exists $(\Sigma \cup \{\phi\})^* \supseteq \Sigma \cup \{\phi\}$ such that $(\Sigma \cup \{\phi\})^* \in Sat(\textsf{BQL}_\textsf{CD}(\Sigma))$ and $\psi \not \in (\Sigma \cup \{\phi\})^*$.  So $(\Sigma \cup \{\phi\})^* \in Sat(\textsf{BQL}_\textsf{CD})$.  Then, by the induction hypothesis, $(\Sigma \cup \{\phi\})^* \Vdash \phi$ and $(\Sigma \cup \{\phi\})^* \not \Vdash \psi$.  But $\Sigma \prec (\Sigma \cup \{\phi\})^*$.  So $\Sigma \not \Vdash \phi \rightarrow \psi$.
\end{proof}

\section{Completeness}

We now drop that assumption that $\mathcal{L}$ is countable. We prove that completeness holds over the extended language $\mathcal{L}^+$.

\begin{lemma}[Weak Completeness] For $|\Gamma| < \omega$: if $\Gamma \models \phi$ then $\Gamma \vdash \phi$. 
\end{lemma}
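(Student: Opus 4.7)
The plan is to argue by contraposition: assume $\Gamma \not\vdash \phi$ and build an $\mathcal{L}^+$-model falsifying $\Gamma \models \phi$. Since the canonical model construction required $\mathcal{L}$ to be countable, the first step is to pass to a countable sublanguage. Because $|\Gamma| < \omega$, only finitely many non-logical symbols occur in $\Gamma \cup \{\phi\}$, so fix a countable sublanguage $\mathcal{L}_0 \subseteq \mathcal{L}$ containing all of them; then $\Gamma \cup \{\phi\} \subseteq \mathcal{L}_0^+$. Any $\mathcal{N}\textsf{BQL}_\textsf{CD}$-proof over $\mathcal{L}_0^+$ is in particular a proof over $\mathcal{L}^+$, so $\Gamma \not\vdash \phi$ continues to hold when we restrict the proof system to $\mathcal{L}_0^+$.

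Next, apply the Extension lemma over $\mathcal{L}_0^+$: the hypothesis $|\{i : a_i \not\in \Gamma\}| = \omega$ is immediate because $\Gamma$ is finite, so we obtain a prime saturated $\textsf{BQL}_\textsf{CD}$-theory $\Gamma^* \supseteq \Gamma$ with $\phi \not\in \Gamma^*$. Building the canonical model $\mathfrak{C}$ over $\mathcal{L}_0$, the Truth lemma gives $\mathfrak{C}, \Gamma^* \Vdash \psi$ iff $\psi \in \Gamma^*$, so $\mathfrak{C}, \Gamma^* \Vdash \Gamma$ while $\mathfrak{C}, \Gamma^* \not\Vdash \phi$. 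Finally, expand $\mathfrak{C}$ to an $\mathcal{L}^+$-model on the same worlds and same domain $T$ of closed $\mathcal{L}_0^+$-terms, interpreting each new constant as some fixed $t \in T$, each new function symbol arbitrarily, and each new relation symbol as empty at every world (so persistence holds trivially). Since no symbol outside $\mathcal{L}_0^+$ occurs in $\Gamma \cup \{\phi\}$, a routine induction shows that satisfaction of these sentences at $\Gamma^*$ is unchanged; hence $\Gamma \not\models \phi$.

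The main obstacle is the reduction to a countable sublanguage: one must verify that non-derivability transfers cleanly from $\mathcal{L}^+$ to $\mathcal{L}_0^+$, that all the earlier machinery (Extension, Relative Extension, Truth, and the non-emptiness of the canonical frame) applies unchanged over $\mathcal{L}_0^+$, and that the countermodel built over $\mathcal{L}_0^+$ can be lifted back to an $\mathcal{L}^+$-model without disturbing the satisfaction of $\Gamma$ and $\phi$.
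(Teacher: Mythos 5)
Your proposal is correct and follows essentially the same route as the paper's own proof: restrict to a countable sublanguage $\mathcal{L}_0$ containing the finitely many symbols of $\Gamma \cup \{\phi\}$, note that non-derivability transfers to $\mathcal{N}\textsf{BQL}_\textsf{CD} \upharpoonright \mathcal{L}_0^+$, apply Extension and the Truth lemma for the canonical model over $\mathcal{L}_0^+$, and expand the resulting countermodel back to $\mathcal{L}^+$. The only differences are cosmetic: you spell out an explicit expansion where the paper simply takes an arbitrary one, which is harmless since the new symbols do not occur in $\Gamma \cup \{\phi\}$.
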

\begin{proof} Suppose $\Gamma \not \vdash \phi$.  Since $|\Gamma| < \omega$, we can find a countable first-order language $\mathcal{L}_0 \subseteq \mathcal{L}$ such that $\Gamma \cup \{\phi\} \subseteq \mathcal{L}_0^+$.  A forteriori, there does not exist a proof in $\mathcal{N}\textsf{BQL}_\textsf{CD} \upharpoonright \mathcal{L}_0^+$ of $\phi$ from $\Gamma$. Since $\{i: a_i \not \in \Gamma\} = \omega$, Extension gives $\Gamma^* \supseteq \Gamma$ such that $\Gamma^* \in Sat(\textsf{BQL}_\textsf{CD})$ (where $Sat(\textsf{BQL}_\textsf{CD})$ is defined over $\mathcal{L}_0^+$) and $\phi \not \in \Gamma^*$.  Let $\mathfrak{C}$ be the canonical model over $\mathcal{L}_0^+$. Then, by Truth: $\mathfrak{C}, \Gamma^* \Vdash \Gamma$ and $\mathfrak{C}, \Gamma^* \not \Vdash \phi$.  So an arbitrary expansion of $\mathfrak{C}$ to $\mathcal{L}^+$ gives $\Gamma \not \models \phi$. 
\end{proof}

\begin{theorem}[Completeness] If $\Gamma \models \phi$ then $\Gamma \vdash \phi$.
\end{theorem}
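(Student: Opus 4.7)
The plan is to leverage the already-established Weak Completeness result by reducing the general case to the finite case via Compactness, which was stated earlier in the Model Theory subsection.

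First, I would suppose $\Gamma \models \phi$. Applying Compactness, I obtain a finite $\Gamma_0 \subseteq \Gamma$ with $\Gamma_0 \models \phi$. Since $|\Gamma_0| < \omega$, Weak Completeness applies and yields $\Gamma_0 \vdash \phi$. Finally, because $\mathcal{N}\textsf{BQL}_\textsf{CD}$-proofs remain valid when extra premises are added (open assumptions may be drawn from any superset of the premises used), $\Gamma_0 \vdash \phi$ immediately gives $\Gamma \vdash \phi$.

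There is essentially no obstacle here: the heavy lifting has been done in Weak Completeness (which is where the canonical model construction was used, requiring a countable language and the Extension lemma) and in Compactness (which was proved via an ultraproduct-style argument). The role of this theorem is just to package the two together so that completeness holds for arbitrary premise sets over a language of arbitrary cardinality. The only thing worth noting, if one wanted to be pedantic, is that monotonicity of $\vdash$ under enlargement of the premise set is implicit in the definition of $\mathcal{N}\textsf{BQL}_\textsf{CD}$, since any proof tree witnessing $\Gamma_0 \vdash \phi$ is already a proof tree witnessing $\Gamma \vdash \phi$.
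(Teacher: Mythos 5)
Your proposal is correct and is exactly the paper's argument: the paper proves Completeness as ``immediate from compactness and weak completeness,'' which is precisely the reduction you spell out (Compactness to get a finite $\Gamma_0 \subseteq \Gamma$, Weak Completeness to get $\Gamma_0 \vdash \phi$, and monotonicity of $\vdash$ to conclude $\Gamma \vdash \phi$). Your extra remark about monotonicity being built into the definition of $\mathcal{N}\textsf{BQL}_\textsf{CD}$ is a harmless and accurate elaboration of a step the paper leaves implicit.
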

\begin{proof} Immediate from compactness and weak completeness.
\end{proof}

\section{Disjunction and Existence Properties}

The canonical model is useful for finding families of models which share a domain and agree on the interpretations of all constant symbols and function symbols.  This allows us to prove that $\textsf{BQL}_\textsf{CD}$ (over the original language $\mathcal{L}$) satisfies the disjunction and existence properties.

\begin{lemma}[Intersection] For $I \neq \emptyset$, let $\{w\} \cup \{u_i\}_{i \in I} \subseteq \mathfrak{M}$ be such that $|R^n|(w) = \bigcap_{i \in I} |R^n|(u_i)$.  Then, for $\phi(\overline{v}) \in \mathcal{L} \setminus \{\rightarrow, \vee, \exists\}$: $w \Vdash \phi(\overline{a})$ iff for all $i$: $u_i \Vdash \phi(\overline{a})$.
\end{lemma}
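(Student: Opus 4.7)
The plan is to proceed by induction on the construction of formulas in the restricted fragment $\mathcal{L} \setminus \{\rightarrow, \vee, \exists\}$, whose building blocks are $\top$, $\bot$, atomic predicates, $\wedge$, and $\forall$. The exclusion of $\rightarrow, \vee, \exists$ is precisely what makes the biconditional work: the $\rightarrow$ clause quantifies over successors of the world, and successors of $w$ need not correspond to successors of the $u_i$; while $\vee$ and $\exists$ are disjunctive in character, so the $u_i$ could collectively force a disjunction or existential without any single disjunct or witness being uniform across them. By contrast, $\wedge$ and $\forall$ have satisfaction conditions that are conjunctions over something world-independent (the two conjuncts, or the elements of $M$), and atomic forcing reduces directly to membership in $|R^n|$.

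For the base case, $\top$ and $\bot$ give the biconditional trivially. For an atomic sentence $R^n(t_1, \ldots, t_n)(\overline{a})$, the term interpretations $|t_k|(\overline{a})$ do not depend on the world, so $w \Vdash R^n(\overline{t})(\overline{a})$ unfolds to $\langle |t_1|(\overline{a}), \ldots, |t_n|(\overline{a}) \rangle \in |R^n|(w)$, which by hypothesis equals $\bigcap_{i \in I} |R^n|(u_i)$, and this in turn is equivalent to $u_i \Vdash R^n(\overline{t})(\overline{a})$ for every $i$. The assumption $I \neq \emptyset$ is used here to rule out the vacuous case in which $\bigcap_{i \in \emptyset} |R^n|(u_i) = M^n$ would let the right-hand side hold while $|R^n|(w)$ could be a proper subset of $M^n$.

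The induction steps for $\wedge$ and $\forall$ are routine commutations of universal quantifiers. For $\phi \wedge \psi$, two applications of the induction hypothesis turn ``$w \Vdash \phi(\overline{a})$ and $w \Vdash \psi(\overline{a})$'' into ``for all $i$: $u_i \Vdash \phi(\overline{a})$, and for all $i$: $u_i \Vdash \psi(\overline{a})$,'' which rearranges to ``for all $i$: $u_i \Vdash (\phi \wedge \psi)(\overline{a})$.'' For $\forall v \psi$, the induction hypothesis (applied uniformly in the parameter $b \in M$) turns ``for all $b \in M$: $w \Vdash \psi(\overline{a}, b)$'' into ``for all $b$ and all $i$: $u_i \Vdash \psi(\overline{a}, b)$,'' which swaps to ``for all $i$: $u_i \Vdash \forall v \psi(\overline{a})$.'' No substantive obstacle arises; the entire argument is a routine induction, and the restriction to the fragment is doing all the real work.
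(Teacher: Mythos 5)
Your proof is correct and is exactly the ``easy induction on the construction of $\mathcal{L} \setminus \{\rightarrow, \vee, \exists\}$-formulas'' that the paper's one-line proof invokes, with the atomic case resting on the hypothesis $|R^n|(w) = \bigcap_{i \in I} |R^n|(u_i)$ and the $\wedge$ and $\forall$ steps being commutations of universal quantifiers over the constant domain. Your observations about why $I \neq \emptyset$ is needed and why the excluded connectives would break the biconditional are accurate, though not required by the paper.
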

\begin{proof} An easy induction on the construction of $\mathcal{L} \setminus \{\rightarrow, \vee, \exists\}$-formulas.
\end{proof}

\begin{lemma}[Weak Disjunction Property] For $\Gamma \subseteq \mathcal{L} \setminus \{\rightarrow, \vee, \exists\}$ such that $|\Gamma| \leq \omega$: if $\Gamma \models \phi \vee \psi$ then $\Gamma \models \phi$ or $\Gamma \models \psi$.
\end{lemma}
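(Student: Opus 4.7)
The plan is to adapt the Weak Completeness argument by constructing two prime saturated theories witnessing $\Gamma \not\models \phi$ and $\Gamma \not\models \psi$ inside a common canonical model, and then gluing them together under a fresh root. Assume for contradiction that $\Gamma \not\models \phi$ and $\Gamma \not\models \psi$. Since $|\Gamma| \leq \omega$ and $\phi, \psi$ involve only finitely many symbols, fix a countable sublanguage $\mathcal{L}_0 \subseteq \mathcal{L}$ containing every non-logical symbol occurring in $\Gamma \cup \{\phi, \psi\}$. By Completeness, $\Gamma \not\vdash \phi$ and $\Gamma \not\vdash \psi$; since $|\{i : a_i \notin \Gamma\}| = \omega$, two applications of Extension (carried out in $\mathcal{L}_0^+$) yield $\Gamma_1, \Gamma_2 \in Sat(\textsf{BQL}_\textsf{CD})$ with $\Gamma \subseteq \Gamma_j$, $\phi \notin \Gamma_1$, and $\psi \notin \Gamma_2$. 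Let $\mathfrak{C}$ be the canonical model over $\mathcal{L}_0^+$. By Truth, $\Gamma_j \Vdash \Gamma$ for $j = 1,2$, while $\Gamma_1 \not\Vdash \phi$ and $\Gamma_2 \not\Vdash \psi$.

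Next form $\mathfrak{C}'$ by adjoining a new world $w_0$ to $\mathfrak{C}$: declare $w_0 \prec u$ iff $u \in \{\Gamma_1, \Gamma_2\}$ or $\Gamma_1 \prec u$ or $\Gamma_2 \prec u$, set $|R^n|(w_0) = |R^n|(\Gamma_1) \cap |R^n|(\Gamma_2)$, and leave all other data of $\mathfrak{C}$ unchanged. Transitivity of the extended $\prec$ follows from transitivity of $\prec$ in $\mathfrak{C}$, and the heredity constraint $|R^n|(w_0) \subseteq |R^n|(u)$ for $w_0 \prec u$ is immediate from the defining intersection, so $\mathfrak{C}'$ is a well-defined $\mathcal{L}_0^+$-model. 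Because the modification adds no new successor to any world of $\mathfrak{C}$, a routine induction on formula complexity shows that satisfaction at every old world is unchanged from $\mathfrak{C}$ to $\mathfrak{C}'$; in particular $\Gamma_1 \not\Vdash \phi$ and $\Gamma_2 \not\Vdash \psi$ persist in $\mathfrak{C}'$, and $\Gamma_j \Vdash \Gamma$ still holds.

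It then remains to verify the two claims at $w_0$. By the contrapositive of Persistence applied to $w_0 \prec \Gamma_1$ and $w_0 \prec \Gamma_2$, we obtain $w_0 \not\Vdash \phi$ and $w_0 \not\Vdash \psi$, whence $w_0 \not\Vdash \phi \vee \psi$. For the positive side, each $\gamma \in \Gamma$ lies in $\mathcal{L} \setminus \{\rightarrow, \vee, \exists\}$, so the Intersection Lemma (with $I = \{1,2\}$, using exactly the equality $|R^n|(w_0) = |R^n|(\Gamma_1) \cap |R^n|(\Gamma_2)$ built into $\mathfrak{C}'$) gives $w_0 \Vdash \gamma$ iff $\Gamma_1 \Vdash \gamma$ and $\Gamma_2 \Vdash \gamma$; both conjuncts follow from $\Gamma \subseteq \Gamma_j$. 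Hence $w_0 \Vdash \Gamma$ and $w_0 \not\Vdash \phi \vee \psi$. An arbitrary expansion of $\mathfrak{C}'$ to an $\mathcal{L}^+$-model then witnesses $\Gamma \not\models \phi \vee \psi$, contradicting the hypothesis.

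The main obstacle is the bookkeeping for $\mathfrak{C}'$: checking that the extended accessibility relation is transitive, that the predicate interpretations remain hereditary, and that the inductive verification of preserved satisfaction at old worlds goes through (the only mildly delicate clause being $\rightarrow$, which is safe because no old world acquires new successors). Once $\mathfrak{C}'$ is established, Intersection and the contrapositive of Persistence do all the remaining work.
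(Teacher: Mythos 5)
Your proposal is correct and follows essentially the same route as the paper: obtain $\Gamma_1,\Gamma_2$ via Extension in the countable canonical model, apply Truth, and adjoin a fresh root whose $|R^n|$-value is the intersection, so that the Intersection lemma gives $w_0 \Vdash \Gamma$ while the contrapositive of Persistence gives $w_0 \not\Vdash \phi$ and $w_0 \not\Vdash \psi$. The only differences are inessential: the paper hangs two worlds-disjoint generated submodels of $\mathfrak{C}$ under the new root rather than working inside a single copy as you do (both are fine, since no old world acquires new successors), and your citation of ``Completeness'' for the step $\Gamma \not\models \phi \Rightarrow \Gamma \not\vdash \phi$ should read ``Soundness.''
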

\begin{proof} Suppose $\Gamma \not \models \phi$ and $\Gamma \not \models \psi$.  Then, by soundness, $\Gamma \not \vdash \phi$ and $\Gamma \not \vdash \psi$.  Since $|\Gamma| \leq \omega$, we can find a countable first-order language $\mathcal{L}_0 \subseteq \mathcal{L}$ such that $\Gamma \cup \{\phi, \psi\} \subseteq \mathcal{L}_0$. A forteriori, neither $\phi$ nor $\psi$ is provable from $\Gamma$ in $\mathcal{N}\textsf{BQL}_\textsf{CD} \upharpoonright \mathcal{L}_0^+$.  Since $\{i: a_i \in \Gamma\} = \emptyset$, Extension gives $\Gamma_\phi, \Gamma_\psi \in Sat(\textsf{BQL}_\textsf{CD})$ (where $Sat(\textsf{BQL}_\textsf{CD})$ is defined over $\mathcal{L}_0^+$) such that (i) $\Gamma \subseteq \Gamma_\phi$ and $\phi \not \in \Gamma_\phi$ and (ii) $\Gamma \subseteq \Gamma_\psi$ and $\psi \not \in \Gamma_\psi$.  Let $\mathfrak{C}$ be the canonical model over $\mathcal{L}_0^+$. Then, by Truth, we have (i) $\mathfrak{C}, \Gamma_\phi \Vdash \Gamma$ and $\mathfrak{C}, \Gamma_\phi \not \Vdash \phi$ and (ii) $\mathfrak{C}, \Gamma_\psi \Vdash \Gamma$ and $\mathfrak{C}, \Gamma_\psi \not \Vdash \psi$.  Let $f(\mathfrak{C})$ be a worlds-disjoint copy of $\mathfrak{C}$ obtained by replacing every $\Sigma \in Sat(\textsf{BQL}_\textsf{CD})$ with $f(\Sigma)$ and leaving everything else unchanged. Let $\mathfrak{C}_\phi$ denote the submodel of $\mathfrak{C}$ generated by $\Gamma_\phi$ and $f(\mathfrak{C})_\psi$ denote the submodel of $f(\mathfrak{C})$ generated by $f(\Gamma_\psi)$. Consider the following $\mathcal{L}^+_0$-model:
$$
\begin{tikzpicture}[modal,node distance=2cm,world/.append style={minimum
size=1cm}]
\node[point] (root) {};

\node[point] (left) [above left of=root] {};

\node[point] (right) [above right of=root] {};

\path[->] (root) edge (left);

\path[->] (root) edge (right);

\draw (-1.415, 1.4)
  -- (-2.415,3) 
  -- (-0.415,3) 
  -- cycle;
  
\draw (1.415, 1.4)
  -- (0.415,3) 
  -- (2.415,3) 
  -- cycle; 
  
\draw (-1.5, 3.3) node[text width = 0.2cm] 
         {$\mathfrak{C}_\phi$}; 
         
\draw (1, 3.3) node[text width = 0.2cm] 
         {$f(\mathfrak{C})_\psi$}; 
\end{tikzpicture}
$$
where the root $w$ is a new world such that $|R^n|(w) = |R^n|(\Gamma_\phi) \cap |R^n|(f(\Gamma_\psi))$.  By Intersection, $w \Vdash \Gamma$.  By Persistence, $w \not \Vdash \phi$ and $w \not \Vdash \psi$.  So $w \not \Vdash \phi \vee \psi$.  Taking the reduct of this model to $\mathcal{L}_0$ and then arbitrarily expanding to $\mathcal{L}$ gives $\Gamma \not \models \phi \vee \psi$. 
\end{proof}

\begin{theorem}[Disjunction Property] For $\Gamma \subseteq \mathcal{L} \setminus \{\rightarrow, \vee, \exists\}$: if $\Gamma \models \phi \vee \psi$ then $\Gamma \models \phi$ or $\Gamma \models \psi$.
\end{theorem}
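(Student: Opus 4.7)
The plan is to bootstrap the Weak Disjunction Property from countable to arbitrary $\Gamma$ using Compactness, exactly as Completeness was obtained from Weak Completeness earlier in the paper. The author flagged this strategy at the start of Section 3 (``we can use compactness to leverage up our canonical model proofs''), so I expect the argument to be a one-line application.

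Concretely, suppose $\Gamma \models \phi \vee \psi$. By Compactness, there exists a finite $\Gamma_0 \subseteq \Gamma$ with $\Gamma_0 \models \phi \vee \psi$. Since $\Gamma_0 \subseteq \Gamma \subseteq \mathcal{L} \setminus \{\rightarrow, \vee, \exists\}$ and $|\Gamma_0| < \omega \leq \omega$, the hypotheses of the Weak Disjunction Property are met, so $\Gamma_0 \models \phi$ or $\Gamma_0 \models \psi$. In either case, since $\Gamma_0 \subseteq \Gamma$, we conclude $\Gamma \models \phi$ or $\Gamma \models \psi$.

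There is essentially no obstacle here: the substantive work—the intersection-of-models construction that handles the disjunction property—has already been done in the Weak Disjunction Property, whose restriction to countable $\Gamma$ was needed only to fit $\Gamma$ into a countable $\mathcal{L}_0$ so that the canonical model machinery of Section 3 applies. Compactness (whose proof the author defers to the ultraproduct method) removes that cardinality assumption for free, because the witnessing finite subset $\Gamma_0$ is trivially countable. The only thing to double-check is that passage from $\Gamma_0$ back to $\Gamma$ preserves the conclusion, which is immediate from the definition of $\models$ (adding premises can only strengthen the set of semantic consequences).
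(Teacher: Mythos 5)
Your proposal is correct and is exactly the argument the paper intends: the paper's proof reads ``Immediate from compactness and the weak disjunction property,'' and your write-up simply fills in the routine details (extract a finite $\Gamma_0$ by Compactness, apply the Weak Disjunction Property to it, and use monotonicity of $\models$ to return to $\Gamma$). No gaps.
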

\begin{proof} Immediate from compactness and the weak disjunction property.
\end{proof}

\begin{lemma}[Weak Existence Property] Suppose $\mathcal{L}$ contains at least one constant symbol. Then, for $\Gamma \subseteq \mathcal{L} \setminus \{\rightarrow, \vee, \exists\}$ such that $|\Gamma| \leq \omega$: $\Gamma \models \exists v \phi$ only if $\Gamma \models \phi(t)$ for some $t \in \mathcal{L}$.
\end{lemma}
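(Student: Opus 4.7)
The plan is to follow the template of the Weak Disjunction Property but with countably many copies of the canonical model, one for each closed term in the domain. Assume $\Gamma \not\models \phi(t)$ for every closed $t \in \mathcal{L}$; I will exhibit a model witnessing $\Gamma \not\models \exists v \phi$. By soundness, $\Gamma \not\vdash \phi(t)$ for every closed $t \in \mathcal{L}$. Fix a countable sublanguage $\mathcal{L}_0 \subseteq \mathcal{L}$ containing $\Gamma \cup \{\phi\}$ together with at least one constant symbol of $\mathcal{L}$; a forteriori, $\Gamma \not\vdash \phi(t)$ in $\mathcal{N}\textsf{BQL}_\textsf{CD} \upharpoonright \mathcal{L}_0^+$ for every closed $t \in \mathcal{L}_0$. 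I would next upgrade this to: $\Gamma \not\vdash \phi(t)$ in the restricted system for every closed $t \in \mathcal{L}_0^+$. If such a proof of $\phi(t)$ existed with $t$ containing $a_{i_1}, \ldots, a_{i_k}$, uniformly substituting each $a_{i_j}$ with the distinguished closed $\mathcal{L}_0$-term would yield a valid proof (since no $a_i$ occurs in $\Gamma \cup \{\phi\}$) of $\phi(t')$ from $\Gamma$ for some closed $t' \in \mathcal{L}_0$, contradicting what we just established.

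With $\Gamma \not\vdash \phi(t)$ for every closed $t \in \mathcal{L}_0^+$ in hand, Extension (applied over $\mathcal{L}_0^+$; the hypothesis $|\{i : a_i \notin \Gamma\}| = \omega$ is trivial since $\Gamma \subseteq \mathcal{L}$) supplies, for each such $t$, a prime saturated theory $\Gamma_t \supseteq \Gamma$ with $\phi(t) \notin \Gamma_t$. Let $\mathfrak{C}$ be the canonical model over $\mathcal{L}_0^+$, whose domain $T$ is the countable and non-empty set of closed $\mathcal{L}_0^+$-terms. For each $t \in T$, take a worlds-disjoint copy $f_t(\mathfrak{C})$ of $\mathfrak{C}$ and let $f_t(\mathfrak{C})_t$ denote the submodel of $f_t(\mathfrak{C})$ generated by $f_t(\Gamma_t)$. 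Glue the family $\{f_t(\mathfrak{C})_t\}_{t \in T}$ under a fresh root $w$ lying below every world in every generated submodel, declaring $|R^n|(w) = \bigcap_{t \in T} |R^n|(f_t(\Gamma_t))$. Since each $f_t(\mathfrak{C})_t$ is transitive and $w$ sits below everything, the resulting accessibility relation is transitive, and the whole structure is an $\mathcal{L}_0^+$-model whose domain is still $T$.

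To check that $w$ witnesses $\Gamma \not\models \exists v \phi$, note that $\Gamma \subseteq f_t(\Gamma_t)$ for every $t \in T$, so by Truth each $f_t(\Gamma_t)$ forces $\Gamma$; since $\Gamma \subseteq \mathcal{L} \setminus \{\rightarrow, \vee, \exists\}$, Intersection gives $w \Vdash \Gamma$. Suppose for contradiction that $w \Vdash \phi(s)$ for some $s \in T$. Since $w \prec f_s(\Gamma_s)$, Persistence yields $f_s(\Gamma_s) \Vdash \phi(s)$, contradicting $\phi(s) \notin \Gamma_s$ via Truth. Hence $w \not\Vdash \phi(s)$ for every $s \in T$, so $w \not\Vdash \exists v \phi$. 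Taking the $\mathcal{L}_0$-reduct and expanding arbitrarily to $\mathcal{L}$ then delivers $\Gamma \not\models \exists v \phi$, as desired.

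The main obstacle is the substitution step upgrading $\Gamma \not\vdash \phi(t)$ from $t \in \mathcal{L}_0$ to $t \in \mathcal{L}_0^+$; this is exactly where the hypothesis that $\mathcal{L}$ contains a constant symbol is used, since without a closed $\mathcal{L}_0$-term there is nothing to substitute for the $a_i$'s. Everything else is a straightforward adaptation of the Weak Disjunction Property argument, replacing its two-pronged glued model with a countably-indexed one; countability of $\Gamma$ is what lets $\mathcal{L}_0$ (hence $T$) remain countable, which is what the canonical model construction requires.
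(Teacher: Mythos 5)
Your proposal is correct and follows essentially the same route as the paper: pass to a countable sublanguage, apply Extension once per closed term, take worlds-disjoint generated submodels of the canonical model glued under a fresh root whose relations are the intersection, then conclude via Intersection and Persistence before taking reducts and expansions. The one point of divergence is the passage from underivability of $\phi(t)$ for $t \in \mathcal{L}_0$ to underivability for $t \in \mathcal{L}_0^+$: the paper handles this semantically \emph{before} invoking soundness (from $\Gamma \models \phi(t_0(\overline{a_i}))$ it infers $\Gamma \models \forall \overline{u}\,\phi(t_0)$ and instantiates at a constant), whereas you handle it proof-theoretically by substituting a fixed closed $\mathcal{L}_0$-term for the parameters occurring in the derivation --- which works, but strictly requires first renaming any $a_{i_j}$ that also serves as the eigenvariable of a $\forall$-Int or $\exists$-Elim step, a standard point your sketch elides.
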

\begin{proof} Suppose $\Gamma \not \models \phi(t)$ for every $t \in \mathcal{L}$.  Suppose for a reductio that $\Gamma \models \phi(t)$ for some $t \in \mathcal{L}^+$.  Then, since $t = t_0(\overline{a_i})$ for some $t_0(\overline{u}) \in \mathcal{L}$, we have by $\forall$-Int that $\Gamma \models \forall \overline{u} \phi(t_0)$. So $\Gamma \models \phi(t_0(c,...,c))$ for some $c \in \mathcal{L}$, which is a contradiction. Therefore $\Gamma \not \models \phi(t)$ for every $t \in \mathcal{L}^+$. So, by soundness, $\Gamma \not \vdash \phi(t)$ for every $t \in \mathcal{L}^+$. Since $|\Gamma| \leq \omega$, we can find a countable first-order language $\mathcal{L}_0 \subseteq \mathcal{L}$ such that $\Gamma \cup \{\phi\} \subseteq \mathcal{L}_0$.  A forteriori, for all $t \in \mathcal{L}_0^+$ there does not exist a proof of $\phi(t)$ from $\Gamma$ in $\mathcal{N}\textsf{BQL}_\textsf{CD} \upharpoonright \mathcal{L}_0^+$. Since $\{i: a_i \in \Gamma\} = \emptyset$, Extension gives us a family $\{\Gamma_t\}_{t \in \mathcal{L}_0^+} \subseteq Sat(\textsf{BQL}_\textsf{CD})$ (where $Sat(\textsf{BQL}_\textsf{CD})$ is defined over $\mathcal{L}_0^+$) such that $\Gamma \subseteq \Gamma_t$ and $\phi(t) \not \in \Gamma_t$. Let $\mathfrak{C}$ be the canonical model over $\mathcal{L}_0^+$.  Then, by Truth: $\mathfrak{C}, \Gamma_t \Vdash \Gamma$ and $\mathfrak{C}, \Gamma_t \not \Vdash \phi(t)$.  Let $\{f_t(\mathfrak{C})\}_{t \in \mathcal{L}_0^+}$ be a family of pairwise worlds-disjoint copies of $\mathfrak{C}$ such that $f_t(\mathfrak{C})$ is obtained by replacing every $\Sigma \in Sat(\textsf{BQL}_\textsf{CD})$ with $f_t(\Sigma)$ and leaving everything else unchanged. Let $f_t(\mathfrak{C})^*$ denote the submodel of $f_t(\mathfrak{C})$ generated by $f_t(\Gamma_t)$. Consider the following $\mathcal{L}_0^+$-model:
$$
\begin{tikzpicture}[modal,node distance=2cm,world/.append style={minimum
size=1cm}]
\node[point] (root) {};

\node[point] (2) [above left of = root] {};

\node[point] (3) [above right of = root] {};

\node[point] (1) [left of = 2] {};

\node[point] (4) [right of = 3] {};

\node[point] (0) [left of = 1] {};

\node[point] (5) [right of = 4] {};

\path[->] (root) edge (0);

\path[->] (root) edge (1);

\path[->] (root) edge (2);

\path[->] (root) edge (3);

\path[->] (root) edge (4);

\path[->] (root) edge (5);

\draw (-1.415, 1.4)
  -- (-1.815,3) 
  -- (-1.015,3) 
  -- cycle;
  
\draw (-3.41, 1.4)
  -- (-3.81, 3) 
  -- (-3.01, 3) 
  -- cycle; 
  
\draw (-5.41, 1.4)
  -- (-5.81, 3) 
  -- (-5.01, 3) 
  -- cycle; 
  
\draw (1.415, 1.4)
  -- (1.015,3) 
  -- (1.815,3) 
  -- cycle; 
  
\draw (3.415, 1.4)
  -- (3.015,3) 
  -- (3.815,3) 
  -- cycle; 
  
\draw (5.415, 1.4)
  -- (5.015,3) 
  -- (5.815,3) 
  -- cycle; 
  
\draw (-5.9, 3.3) node[text width = 0.2cm] 
         {$f_{t_0}(\mathfrak{C})^*$}; 
         
\draw (-3.9, 3.3) node[text width = 0.2cm] 
         {$f_{t_1}(\mathfrak{C})^*$}; 
         
\draw (-1.9, 3.3) node[text width = 0.2cm] 
         {$f_{t_2}(\mathfrak{C})^*$}; 
         
\draw (0.9, 3.3) node[text width = 0.2cm] 
         {$f_{t_3}(\mathfrak{C})^*$}; 
         
\draw (2.9, 3.3) node[text width = 0.2cm] 
         {$f_{t_4}(\mathfrak{C})^*$}; 
         
\draw (4.9, 3.3) node[text width = 0.2cm] 
         {$f_{t_5}(\mathfrak{C})^*$}; 
         
\draw node[text width = 0cm, right of = 4]
	{..............};
\end{tikzpicture}
$$
where the root $w$ is a new world such that $|R^n|(w) = \bigcap_{t \in \mathcal{L}_0^+} |R^n|(f_t(\Gamma_t))$. By Intersection, $w \Vdash \Gamma$.  By Persistence, $w \not \Vdash \phi(t)$ for all $t \in \mathcal{L}_0^+$. So $w \not \Vdash \exists v \phi$. Taking the reduct of this model to $\mathcal{L}_0$ and then arbitrarily expanding to $\mathcal{L}$ gives $\Gamma \not \models \exists v \phi$.
\end{proof}

\begin{theorem}[Existence Property] Suppose $\mathcal{L}$ contains at least one constant symbol. Then, for $\Gamma \subseteq \mathcal{L} \setminus \{\rightarrow, \vee, \exists\}$: $\Gamma \models \exists v \phi$ only if $\Gamma \models \phi(t)$ for some $t \in \mathcal{L}$.
\end{theorem}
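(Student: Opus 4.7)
The plan is to derive this from the Weak Existence Property by the same compactness-upgrade trick that the preceding Disjunction Property uses. The Weak Existence Property already gives the conclusion whenever $|\Gamma| \leq \omega$, so the only remaining work is to reduce the general case to a countable (indeed finite) one.

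Concretely, I would suppose $\Gamma \models \exists v \phi$ and apply Compactness to extract a finite $\Gamma_0 \subseteq \Gamma$ with $\Gamma_0 \models \exists v \phi$. Since $\Gamma_0 \subseteq \Gamma \subseteq \mathcal{L} \setminus \{\rightarrow, \vee, \exists\}$ and $|\Gamma_0| < \omega \leq \omega$, the Weak Existence Property applies and yields a term $t \in \mathcal{L}$ with $\Gamma_0 \models \phi(t)$. Monotonicity of $\models$ (any model of $\Gamma$ is a model of $\Gamma_0$) then upgrades this to $\Gamma \models \phi(t)$, which is what we want. The hypothesis that $\mathcal{L}$ contains a constant symbol is inherited from the Weak Existence Property and is needed there (to ensure $\mathcal{L}$ has closed terms at all), not at this outer step.

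There is essentially no obstacle here, since every nontrivial piece of work has already been done: the Weak Existence Property absorbs the main construction (the fan of worlds-disjoint canonical submodels $f_t(\mathfrak{C})^*$ rooted at a common $w$ whose relational interpretations are the intersection), and Compactness handles cardinality. I would present the proof in a single sentence mirroring the proof of the Disjunction Property, namely: immediate from Compactness and the Weak Existence Property.
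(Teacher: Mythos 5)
Your proposal is correct and is exactly the paper's argument: the paper's proof reads ``Immediate from compactness and the weak existence property,'' and your elaboration (extract a finite $\Gamma_0 \subseteq \Gamma$ via Compactness, apply the Weak Existence Property to obtain $t$ with $\Gamma_0 \models \phi(t)$, then conclude $\Gamma \models \phi(t)$ by monotonicity of $\models$) is precisely what that one-liner intends.
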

\begin{proof} Immediate from compactness and the weak existence property.
\end{proof}

\section{References}

[1] Gabbay D. M., Shehtman V. B., and Skvortsov D. P., \textit{Quantification in Non-Classical Logic: Volume 1}, Elsevier, 2009. 

[2] Ishigaki R., and Kikuchi K., Tree-Sequent Methods for Subintuitionistic Predicate Logics, in Olivetti N. (ed.), \textit{Automated Reasoning with Analytic Tableaux and Related Methods: 16th International Conference Proceedings}, Springer, 2007, pp. 149 -- 164.

[3] Poizat B., \textit{A Course in Model Theory: An Introduction to Contemporary Mathematical Logic}, Springer, 2000.

[4] Priest G., Paraconsistent Logic, in Gabbay D. M. and Guenthner F. (eds.), \textit{Handbook of Philosophical Logic, 2nd Edition: Volume 6}, Kluwer Academic Publishers, 2002, pp. 287 -- 393.

[5] Restall G., Subintuitionistic Logics, \textit{Notre Dame Journal of Formal Logic} 35(1): 116 -- 129, 1994.

[6] Ruitenburg W., Basic Predicate Calculus, \textit{Notre Dame Journal of Formal Logic} 39(1): 18 -- 46, 1998.

[7] Visser A., A Propositional Logic with Explicit Fixed Points, \textit{Studia Logica} 40(2): 155 -- 175, 1981. 

\end{document}